%
%
%


\documentclass{amsart}





\newtheorem{theorem}{Theorem}[section]
\newtheorem{lemma}[theorem]{Lemma}
\usepackage{amssymb}
\usepackage{amsmath}
\usepackage{stmaryrd}
\usepackage{hyperref}
\hypersetup{colorlinks=true,
linkcolor=red,
filecolor=magneta,
}
\usepackage{blindtext}

\theoremstyle{definition}
\newtheorem{definition}[theorem]{Definition}
\newtheorem{example}[theorem]{Example}

\newtheorem{proposition}[theorem]{Proposition}
\newtheorem{corollary}[theorem]{Corollary}
\newtheorem{remark}[theorem]{Remark}
\DeclareMathOperator{\Cone}{Cone}
\DeclareMathOperator{\Max}{max}
\DeclareMathOperator{\Vol}{Vol}

\DeclareMathOperator{\Dim}{dim}

\numberwithin{equation}{section}

\begin{document}

\title{A Volume = Multiplicity formula for $p$-families of ideals }

\author{Sudipta Das}
\address{1290 Frenger Mall, Las Cruces, New Mexico 88003-8001}
\curraddr{}
\email{sudiptad@nmsu.edu}
\thanks{}

\subjclass[2020]{Primary 13A18, 13D40, 13H15}
\date{}

\dedicatory{}


\begin{abstract}
In this paper, we work with certain families of ideals called $p$-families in rings of prime characteristic. This family of ideals is present in the theories of tight closure, Hilbert-Kunz multiplicity, and $F$-signature. For each $p$-family of ideals, we attach a Euclidean object called $p$-body, which is analogous to the Newton Okounkov body associated with a graded family of ideals. Using the combinatorial properties of $p$-bodies and algebraic properties of the Hilbert-Kunz multiplicity, we establish in this paper a Volume = Multiplicity formula for $p$-families of $\mathfrak{m}_{R}$-primary ideals in a Noetherian local ring $R$.
\end{abstract}

\maketitle

\section{Introduction}
Let $(R,\mathfrak{m}, \mathbb{K})$ be a Noetherian local ring of dimension $d$, with prime characteristic $p>0$, $I$ be an $\mathfrak{m}$-primary ideal and $q=p^{e}$ for some $e \in \mathbb{N}$, and $I^{[q]}= \left( x^{q} \mid x \in I \right)$, the $q$-th Frobenius power of $I$. We denote the $\mathfrak{m}$-adic completion of $R$ by $\hat{R}$ and $\ell_{R}(-)$ denotes the length as an $R$-module. 

 The Hilbert-Kunz theory deals with the question of how $\ell_{R}\left(R/I^{[q]}\right)$ behaves as a function on $q$ and how understanding this behavior leads us to have a better understanding of the singularities of the ring. In his work, Kunz \cite{kunz1976noetherian} introduced this study to measure how close the ring $R$ is to be regular. Later P. Monsky \cite{monsky1983hilbert} showed that \\
\begin{equation*}
\label{Equation 1.1}
    e_{HK}(I,R)=\lim_{e \to \infty} \dfrac{\ell_{R}(R/I^{[q]})}{q^{d}} 
\end{equation*} exists for any $\mathfrak{m}$-primary ideal $I$, this positive real number is called the \textit{Hilbert-Kunz Multiplicity} of $I$. Contrary to the Hilbert-Samuel multiplicity of an $\mathfrak{m}$-primary ideal $I$, which is given by $ e(I,R) = \lim_{k\to \infty} \dfrac{\ell_{R}(R/I^{k})}{k^{d}/d!}$, the Hilbert-Kunz Multiplicity demonstrates much more complicated behavior \cite{han1993some}

In this paper, we work with certain families of ideals called $p$-families. A $p$-family of ideals is a sequence of ideals $ I_{\bullet}=\{I_{q}\}_{q=1}^{\infty}$ with $I_{q}^{[p]} \subseteq I_{pq}$ for all $q$. Given a $p$-family of $\mathfrak{m}$-primary ideals, we define the following limit as the \textit{volume} of the $p$-family.
\begin{equation*}
    \Vol_{R}(I_{\bullet})=\lim_{q\to \infty} \dfrac{\ell_{R}(R/I_{q})}{q^{d}}
\end{equation*}
In \cite{hernandez2018local}, Jack Jeffries and Daniel J Hern\'andez showed that $\Vol_{R}(I_{\bullet})$ exists as a limit if and only if $\Dim N(\hat{R}) <d$, where $N(\hat{R})$ denotes the nilradical of $\hat{R}$. This provides an alternating proof of the existence of Hilbert Kunz multiplicity and $F$-signature for a complete local domain (\cite{hernandez2018local}, Corollary 6.1,6.8).

This article aims to prove the following general \textit{Volume = Multiplicity} formula for $p$-families of $\mathfrak{m}$-primary ideals.\\ \\
\textbf{Theorem 5.3.} Let  $(R,\mathfrak{m},\mathbb{K})$ be a $d$-dimensional local ring of characteristic $p>0$. If $\Dim N(\hat{R}) < d$, then for any $p$-family of $\mathfrak{m}$-primary ideals $I_{\bullet}=\{I_{q}\}_{q=1}^{\infty}$\\
\begin{equation*}
   \Vol_{R}(I_{\bullet})= \lim_{q\to\infty} \frac{\ell_{R}(R/I_{q})}{q^{d}}=\lim_{q\to\infty} \frac{e_{HK}(I_{q},R)}{(q)^{d}}
\end{equation*}

To prove our main result, we have followed the approach of Cutkosky's existence theorem (\cite{cutkosky2013multiplicities} \cite{cutkosky2014asymptotic}), which can be outlined as follows:

\textit{(1)} Reduce to the case of a complete local domain.

\textit{(2)} Using a suitable valuation on this complete local domain $R$, we attach to every $p$-family of ideals a combinatorial structure, called $p$-system of ideals (see Definition \ref{P-system}) and associate an Euclidean object called $p$-body (see Definition \ref{P-body}).

\textit{(3)} To compute the relevant $R$-module lengths, we prove an approximation Theorem for $p$-systems of ideals. Since Lazarsfeld and Musta\c{t}\u{a} in their paper \cite[Proposition 3.1]{lazarsfeld2009convex} have proved similar approximation theorem and using it recovered the Fujita Approximation Theorem \cite[Theorem 3.3]{lazarsfeld2009convex}, we call it \textit{Fujita Type Approximation Theorem for $p$-systems of ideals} (see Theorem \ref{Fujita type approximation}).\\

Although the Hilbert-Kunz multiplicity behaves much differently compared to the Hilbert-Samuel multiplicity, a parallel analog of Theorem \ref{Volume =Multiplicity Formula} for every graded family $J_{\bullet}$ = $\{J_{n}\}_{n \in \mathbb{N}}$ (i.e., a sequence of ideals $J_{\bullet}$ such that $J_{m}. J_{n} \subseteq J_{m+n}$) of $\mathfrak{m}$-primary ideals and Hilbert-Samuel multiplicities has been proven for valuation ideals associated to an Abhyankar valuation in a regular local ring which is essentially of finite type over a field in \cite{ein2003uniform} when $R$ is a local domain which is essentially of finite type over an algebraically closed field $\mathbb{K}$ with $R/\mathfrak{m} = \mathbb{K}$ in \cite{lazarsfeld2009convex}, it is proven when $R$ is analytically unramified with perfect residue field in \cite{cutkosky2014asymptotic}. Finally in \cite{cutkosky2015general} Cutkosky settled this for any $d$-dimensional Noetherian local ring $R$ with $\Dim N(\hat{R}) < d$.\\

\section{Preliminaries }\label{Con&BasNot}
This section reviews a few basic notions from semigroup theory, convex geometry, and valuation theory.

Let $U$ and $V$ be arbitrary subsets of $\mathbb{R}^{d}$. If $U$ is Lebesgue measurable, its measure $\Vol_{\mathbb{R}^{d}}(U)$ is called the \textit{volume} of $U.$ We define the Minkowski sum of two sets as  \ $U+V := \left\{ \ \boldsymbol{u}+ \boldsymbol{v} \ | \  \boldsymbol{u} \in{U},   \boldsymbol{v} \in{V} \right\}.$

A \textit{convex cone} is any subset of $\mathbb{R}^{d}$ that is closed under taking an $\mathbb{R}$-linear combination of points with non-negative coefficients. Let $Cone(U) \subseteq \mathbb{R}^{d}$ be the convex cone which is the closure of the set of all linear combinations $\sum_{i} \lambda_{i}u_{i},$ with $u_{i} \in U$ and $\lambda_{i} \in \mathbb{R}_{\geqslant 0}$. Note that a cone in $\mathbb{R}^{d}$ has a non-empty interior if and only if the real vector space it generates has dimension $d.$ We call such a cone \textit{full-dimensional}. 

A cone $C$ is \textit{pointed} if it is closed and if there exists a vector $\boldsymbol{a} \in{\mathbb{R}^{d}}$ such that $ ( \boldsymbol{u}, \boldsymbol{a}) > 0$, for all $\boldsymbol{u} \in C \setminus \{\boldsymbol{0}\}$, where $ (-, -) $ is the inner product in $\mathbb{R}^{d}$. 

If $C$ is a pointed cone and $\alpha$ is a non-negative real number, we define:
\begin{equation*}
H = H_{\alpha} := \left\{ \ \boldsymbol{u} \in{\mathbb{R}^{d}} \ | \ (  \boldsymbol{u}, \boldsymbol{a}  ) < \alpha \ \right\}
\end{equation*}
a \textit{truncating half-space} for $C.$ 
\begin{remark}
\label{Remark(2.1)}
Let $C$ be a pointed cone. If $\alpha > 0,$ then a \textit{truncation} of $C$ is defined as $ C \cap H_{\alpha}$. It is a non-empty, bounded  subset of $C$.
\end{remark}
\begin{remark}
\label{Remark(2.2)}
 If $H$ is any truncating halfspace of a pointed cone $ C$ then so is $qH$, for all $q > 0$. 
\end{remark}
We define a \textit{semigroup} to be any subset of $\mathbb{Z}^{d}$ that contains $\boldsymbol{0}$ and it is closed under addition; e.g. $\mathbb{N}$ is the semigroup of non-negative integers. A semigroup $S$ is \textit{finitely generated} if there exist a finite subset $S_{0} \subseteq S$ such that every element of $S$ can be written as an $\mathbb{N}$-linear combination of elements of $S_{0}.$ A subset $T \subseteq S$ of a semigroup $S,$ is an \textit{ideal} of $S$ whenever $S+T$ is contained in $T.$ A semigroup $S$ is called \textit{pointed} if $\boldsymbol{a} \in S$ and $\boldsymbol{-a} \in S$ then $\boldsymbol{a}$ is $\boldsymbol{0}$.
\begin{remark}
\label{Remark(2.3)}
 If $\Cone (S)$ is pointed, then so is $S$, but the converse is generally false. However, it does hold if $S$ is assumed to be finitely generated.
\end{remark} 
\begin{remark}
\label{Remark(2.4)}
For any $\mathbb{Z}$-linear embedding $ i: \mathbb{Z}^{d} \hookrightarrow \mathbb{R}$, there exists some unique $\boldsymbol{a} \in \mathbb{R}^{d}$, whose coordinates are linearly independent over $\mathbb{Q}$, such that $i(\boldsymbol{u})=(\boldsymbol{a},\boldsymbol{u})$ for all $u \in \mathbb{Z}^{d}$.
We define $\boldsymbol{u} \leqslant_{\boldsymbol{a}} \boldsymbol{v}$ whenever $(\boldsymbol{a}, \boldsymbol{u}) \leqslant (\boldsymbol{a}, \boldsymbol{v})$.
\end{remark} 
Let $\mathbb{F}^{\times} = \mathbb{F}\setminus\left\{ 0  \right\}$. Fix, $\boldsymbol{a} \in{\mathbb{R}^{d}},$ and fix the embedding $\mathbb{Z}^{d} \hookrightarrow \mathbb{R}$ given by $ \boldsymbol{v} \to (\boldsymbol{a}, \boldsymbol{v}),$ defined by $\boldsymbol{a}$.  An $\boldsymbol{a}$-\textit{valuation} on a field $\mathbb{F}$ with value group $\mathbb{Z}^{d}$ is a surjective group homomorphism:
$\vartheta : \mathbb{F}^{\times} \rightarrow \mathbb{Z}^{d}$
with the property that
\begin{align*}
\vartheta(xy) = \vartheta(x)+\vartheta(y), \\
\vartheta(x+y) \geqslant_{\boldsymbol{a}} \mathrm{min} \left\{ \vartheta(x), \vartheta(y) \right\}.
\end{align*}
For any subset $N \subseteq \mathbb{F},$ with $N^{\times} = N \setminus \left\{ 0 \right\}$ we define $\vartheta(N) := \vartheta(N^{\times}),$ and this is called the \textit{image} of $N$ under $\vartheta.$ \\
Given a point $\boldsymbol{u} \in{\mathbb{Z}^{d}},$ we define:
\begin{align*}
\mathbb{F}_{\geqslant \boldsymbol{u}} := \left\{ \ x \in{\mathbb{F}} \ | \ \vartheta(x) \geqslant_{\boldsymbol{a}} \boldsymbol{u} \ \right\} \cup \left\{ 0 \right\} \quad \text{and} \quad
\mathbb{F}_{> \boldsymbol{u}} := \left\{ \ x \in{\mathbb{F}} \ | \ \vartheta(x) >_{\boldsymbol{a}} \boldsymbol{u} \ \right\} \cup \left\{ 0 \right\}
\end{align*}
The \textit{local ring} of $\vartheta$ is the subring $(V_{\vartheta}, m_{\vartheta}, k_{\vartheta})$ of $\mathbb{F},$ such that $V_{\vartheta} = \mathbb{F}_{\geqslant 0},$ and $m_{\vartheta} = \mathbb{F}_{> \boldsymbol{0}}.$ A local domain $(D, m, k)$ is \textit{dominated by} $(V_{\vartheta}, m_{\vartheta}, k_{\vartheta}),$ if $(D, m, k)$ is a local subring of $(V_{\vartheta}, m_{\vartheta}, k_{\vartheta})$, i.e., $\mathfrak{m} \subseteq \mathfrak{m}_{\vartheta}$.
\begin{remark}
\label{Remark(2.5)}
If $\boldsymbol{u} \in{\mathbb{Z}^{d}},$ then both $\mathbb{F}_{\geqslant \boldsymbol{u}}$ and $\mathbb{F}_{> \boldsymbol{u}}$ are modules over $V_{\vartheta}$ (as $V_{\vartheta} = \mathbb{F}_{\geqslant 0}$ and $\vartheta(xy) = \vartheta(x)+\vartheta(y)$) and $\mathbb{F}_{\geqslant \boldsymbol{u}}/\mathbb{F}_{> \boldsymbol{u}}$ is a vector space over $k_{\vartheta}.$ Note that, if $\boldsymbol{w} = \vartheta(f),$ \ for some non-zero $f \in{\mathbb{F}},$ then:
\begin{equation*}
\mathrm{dim}_{k_{\vartheta}} \left( \mathbb{F}_{\geq \boldsymbol{w}}/\mathbb{F}_{> \boldsymbol{w}} \right) = 1.
\end{equation*}
Indeed if $f$ is as above then for any $g$ such that $\vartheta(g) = \boldsymbol{w},$ we have $g = \left( \dfrac{g}{f} \right) \cdot f $, where $ \dfrac{g}{f} \in k_{\vartheta}$ because $ \vartheta \left( \dfrac{g}{f} \right )= 0$.
\end{remark}
\begin{remark}
\label{Remark(2.6)}
Let $(D,m,k)$ be a local domain of dimension $d$ with fractional field $\mathbb{F},$ and let $\vartheta : \mathbb{F}^{\times}\twoheadrightarrow \mathbb{Z}^{d}$ be an  $ \boldsymbol{a}$-\textit{valuation} with value group $\mathbb{Z}^{d}.$ Note that $S = \vartheta(D)$ is a subsemigroup of $\mathbb{Z}^{d},$ and the image of any ideal $I$ in $D$ is a semigroup ideal of $S.$ Moreover as $\vartheta$ is surjective, the Minkowski sum $S+(-S)$ is equal to $\mathbb{Z}^{d},$ as $\vartheta$ is surjective. Therefore, the real vector space generated by $\Cone(S)$ is $\mathbb{R}^{d}$, i.e., $\Cone(S)$ is full dimensional.

Observe that $D$ is dominated by $V_{\vartheta}$ if and only if for all $\boldsymbol{x} \in{\vartheta(m)},$ we have $\boldsymbol{x} >_{\boldsymbol{a}} \boldsymbol{0}$ and $S = \vartheta(m) \cup \left\{ \boldsymbol{0} \right\}$, i.e., any non-zero $\boldsymbol{u} \in{S},$ satisfies $(\boldsymbol{a}, \boldsymbol{u}) > 0.$
\end{remark}
\begin{definition}
\label{Strongly dominated}
Following Remark \ref{Remark(2.6)}, if $D$ is dominated by $V_{\vartheta}$ and $(\boldsymbol{a}, \boldsymbol{u}) > 0$, for every $\boldsymbol{u}  \in{\Cone(S) \setminus \left\{ \boldsymbol{0} \right\}}$, then we say $D$ is \textit{strongly dominated} by $\vartheta$.
\end{definition}

\section{OK-valuation, OK-domain, $p$-systems and $p$-bodies}\label{OK}
In this section, we review the constructions of  OK-valuation, OK-domain, $p$-systems, and $p$-bodies following the work of Cutkosky (\cite{cutkosky2013multiplicities},\cite{cutkosky2014asymptotic}) and Jeffries and Hern\'andez \cite{hernandez2018local}. The main result of this section is Theorem \ref{Fujita type approximation}.
\begin{definition}
\label{OK valuation}
Let $(D, m, k)$ be a $d$-dimensional local domain  with fractional field $\mathbb{F}$. Fix the embedding $\mathbb{Z}^{d} \hookrightarrow \mathbb{R}$  defined by $\boldsymbol{a} \in \mathbb{R}^{d}.$\\
If an $\boldsymbol{a}$-valuation
\begin{equation*}
     \vartheta : \mathbb{F}^{\times} \to \mathbb{Z}^{d}
\end{equation*}
with value group $\mathbb{Z}^{d},$ and local ring $(V_{\vartheta},m_{\vartheta},k_{\vartheta})$ satisfies the following conditions:\\
 \textit{i}. $D$ is strongly dominated by $V_{\vartheta}$, (see Definition \ref{Strongly dominated})\\
 \textit{ii}. the resulting extension of residue fields $k \hookrightarrow k_{\vartheta}$ is finite, and\\
 \textit{iii.} there exists a point  \textit{$\boldsymbol{v}$} $\in{\mathbb{Z}^{d}}$  such that:

\begin{equation*}
D \cap \mathbb{F}_{\geqslant n \boldsymbol{v}} \subseteq m^{n},  \ \forall n \in{\mathbb{N}}
\end{equation*}
then we say that $(V_{\vartheta},m_{\vartheta},k_{\vartheta})$ is \textit{OK-relative} to $D$.
\end{definition}
\begin{definition}
\label{OK Domain}
Let $D$ be a $d$-dimensional local domain with fraction field $\mathbb{F}$. If there exists a valuation $\vartheta$ on $\mathbb{F}$ with value group $\mathbb{Z}^{d}$, and valuation ring $(V_{\vartheta},m_{\vartheta},k_{\vartheta})$ that is \textit{OK-relative} to $D$, we say that $D$ is an \textit{OK-domain}.
\end{definition}
\begin{example}
\label{Example 3.3}
Let $R=\mathbb{K}\llbracket x_{1},...,x_{d} \rrbracket$  is a power series ring  of dimension $d$, containing a field $\mathbb{K}$, $\mathfrak{m}=(x_{1},...,x_{d})$ is the maximal ideal and $\mathbb{F}$ is the quotient field of $R$. Take $a_{1},...,a_{d}$ be rationally independent real numbers and without loss of generality assume $a_{i} >0$ for all $1\leqslant i \leqslant d $. Define an ordering on $\mathbb{Z}^{d}$ by $\boldsymbol{a}=(a_{1},...,a_{d})$.\\
Let $f \in R$; then we can write, \\
$$f= \sum_{ s_{i_{1},...,i_{d}} \in \mathbb{K} , (i_1,...,i_d) \in \mathbb{N}^{d}} s_{i_{1},...,i_{d}}x_{1}^{i_{1}}...x_{d}^{i_{d}}. $$\\
Since $\boldsymbol{a}$ has positive entries, this will imply that every subset of $\mathbb{N}^{d}$ has a least element, i.e., there exist a well defined least exponent vector $(i_{1}^{'},...,i_{d}^{'})$ among its terms and we define, $\vartheta(f)=(i_{1}^{'},...,i_{d}^{'})$.\\
We claim that the induced valuation $\vartheta : \mathbb{F}^{\times} \to \mathbb{Z}^{d}$ is OK relative to $R$.\\
Let $S=\vartheta(R)$, therefore from the above definition it is clear that $C=\Cone(S)$ is contained in the non-negative octant, then $(\boldsymbol{u},\boldsymbol{a}) > 0$ for every nonzero $\boldsymbol{u}$ in $C$.
This shows that $R$ is strongly dominated by $\vartheta$, satisfying condition \textit{i}.
Let $(V,\mathfrak{m}_{\vartheta},\mathbb{K}_{\vartheta})$ is the valuation ring $\mathbb{F}_{\geqslant \boldsymbol{0}}$ with it's maximal ideal $\mathbb{F}_{ > \boldsymbol{0}}$, then clearly $\mathbb{K}_{\vartheta}= V/\mathfrak{m}_{\vartheta} \cong R/\mathfrak{m} \cong \mathbb{K}$, which satisfies condition \textit{ii}.
Moreover, let $a_{j}=\Max\{a_{1},...,a_{d}\}$, then we claim that $\vartheta(x_{j})=(0,..,1,0,..,0)=\boldsymbol{v}$ is the vector for condition \textit{iii}. Indeed let,
\begin{flalign*}
    f \in R \cap \mathbb{F}_{\geqslant a\boldsymbol{v}} \implies \vartheta(f) \geqslant a \boldsymbol{v} \implies (i_{1},...,i_{d}) \geqslant (0,..,a,0,..,0)
    \end{flalign*}
    \begin{flalign*}
        \implies a_{1}i_{1}+...+a_{d}i_{d} \geqslant aa_{j} \implies aa_{j} \leqslant a_{j}i_{1}+...+a_{j}i_{d}   (\because a_{j}=\Max\{a_{1},...,a_{d}\})
         \end{flalign*}
         \begin{flalign*}
            \implies a \leqslant i_{1}+...+i_{d} \implies R \cap \mathbb{F}_{\geqslant a\boldsymbol{v}} \subseteq \mathfrak{m}^{a}
\end{flalign*}
     
This finishes the proof that every power series ring containing a field is an OK-domain.

\end{example}
\begin{example}  \cite [Example 3.5]{hernandez2018local})
\label{Example 3.4}
This immediately implies a regular local ring $R$ containing a field is an OK-domain.
\end{example}
 \begin{example}
     
  \cite [Corollary 3.7]{hernandez2018local}
 \label{Example 3.5}
An interesting class of OK-domain examples is excellent local domains containing a field. In particular, a complete local domain containing a field is an OK-domain.
\end{example}

For the rest of this section, we fix a $d$-dimensional local domain $D$ of characteristic $p > 0$ with residue field $k$ and fraction field $\mathbb{F},$ a $\mathbb{Z}$-linear embedding of $\mathbb{Z}^{d}$ into $\mathbb{R}$ induced by a vector $\boldsymbol{a}$ in $\mathbb{R}^{d}$, and a valuation $\vartheta : \mathbb{F}^{\times} \twoheadrightarrow \mathbb{Z}^{d}$ that is OK relative to $D$. We use $S$ to denote the semigroup $\vartheta(D)$ in $\mathbb{Z}^{d}$, and $C$ to denote the closed cone in $\mathbb{R}^{d}$ generated by $S$. We follow the notation established in Definition \ref{OK valuation}.
\begin{remark}
\label{Remark 3.6}
If $M$ is a $D$-submodule of $\mathbb{F},$ then the $D$-module structure on $M$ induces a $k$-vector space structure on the quotient $\dfrac{M \cap \mathbb{F}_{\geqslant \boldsymbol{u}}}{M \cap \mathbb{F}_{> \boldsymbol{u}}}$ and by definition this space is non-zero if and only if there exists an element $x \in{M}$ with $\vartheta(x) = \boldsymbol{u}.$
\end{remark}
\begin{definition}
\label{Definition 3.7}
For a $D$-submodule $M$ of $\mathbb{F}$, we define:
\begin{equation*}
\vartheta^{(h)}(M)=\left \{ \boldsymbol{u} \in \mathbb{Z}^{d} \mid \mathrm{dim}_{k} \left( \dfrac{M \cap \mathbb{F}_{\geqslant \boldsymbol{u}}}{M \cap \mathbb{F}_{> \boldsymbol{u}}} \right) \geqslant h \right\},
\end{equation*}
where $1 \leqslant h \leqslant [k_{\vartheta}:k]$.
\end{definition}
\begin{remark}
\label{Remark 3.8}
Let $g \in D^{\times},$ with $\boldsymbol{v} = \vartheta(g),$  and let $\boldsymbol{u} \in \mathbb{Z}^{d}.$ Let $I_{\bullet}=\{I_{q}\}_{q=1}^{\infty}$ be a sequence of ideals indexed by $q=p^{e}$, then the map:
\begin{equation*}
\dfrac{I_{q} \cap \mathbb{F}_{\geqslant \boldsymbol{u}}}{I_{q} \cap \mathbb{F}_{> \boldsymbol{u}}} \rightarrow \dfrac{I_{q} \cap \mathbb{F}_{\geqslant \boldsymbol{u}+\boldsymbol{v}}}{I_{q} \cap \mathbb{F}_{> \boldsymbol{u}+\boldsymbol{v}}} \ : \ [x] \mapsto [gx]
\end{equation*}
is a $k$-linear injection for all $\boldsymbol{u} \in{\mathbb{Z}^{d}}$ and for all  $I_{q}$ in that sequence. Therefore using Definition \ref{Definition 3.7}, we have $\vartheta^{(h)}(I_{q}) + S \subseteq \vartheta^{(h)}(I_{q}) $ ,i.e., $\vartheta^{(h)}(I_{q})$ is an ideal of $S=\vartheta(D)$ for all $1 \leqslant h \leqslant [k_{\vartheta}:k]$.
\end{remark}
\begin{lemma}\cite[Lemma 3.11]{hernandez2018local}
\label{Lemma 3.9}
For a $D$-submodule $M$ of $\mathbb{F}$ and $\boldsymbol{v} \in{\mathbb{Z}^{d}},$ we have:
\begin{equation*}
\ell_{D} \left( M/(M \cap \mathbb{F}_{\geqslant \boldsymbol{v}}) \right) = \sum_{h = 1}^{[k_{\vartheta} : k]}  \#(\vartheta^{(h)}(M) \cap H)
\end{equation*}
where $H= \left\{\boldsymbol{u} \in{\mathbb{R}^{d}} \ | (  \boldsymbol{a}, \boldsymbol{u} ) < ( \boldsymbol{a}, \boldsymbol{v} )  \right\}.$
\end{lemma}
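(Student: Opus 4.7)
The plan is to reinterpret both sides of the identity as contributions from each lattice point $\boldsymbol{u} \in H \cap \mathbb{Z}^{d}$, by showing that both sides equal
\begin{equation*}
\sum_{\boldsymbol{u} \in H \cap \mathbb{Z}^{d}} \dim_{k}\left( \frac{M \cap \mathbb{F}_{\geqslant \boldsymbol{u}}}{M \cap \mathbb{F}_{> \boldsymbol{u}}} \right).
\end{equation*}
By Remark \ref{Remark(2.5)}, $\mathbb{F}_{\geqslant \boldsymbol{u}}/\mathbb{F}_{> \boldsymbol{u}}$ is a $1$-dimensional $k_{\vartheta}$-vector space, so each quotient above embeds into an $[k_{\vartheta}:k]$-dimensional $k$-space, yielding the bound $\dim_{k} \leqslant [k_{\vartheta}:k]$. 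A Fubini-style double count of pairs $(h, \boldsymbol{u})$ with $1 \leqslant h \leqslant [k_{\vartheta}:k]$ and $\boldsymbol{u} \in \vartheta^{(h)}(M) \cap H$, together with the definition of $\vartheta^{(h)}(M)$ (Definition \ref{Definition 3.7}), then identifies the right-hand side of the lemma with the displayed sum.

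For the left-hand side, I would first upgrade each $(M \cap \mathbb{F}_{\geqslant \boldsymbol{u}})/(M \cap \mathbb{F}_{> \boldsymbol{u}})$ from a $D$-module to a $k$-vector space: the strong-domination hypothesis (Definition \ref{Strongly dominated}) forces every non-zero element of $\mathfrak{m}$ to have valuation $>_{\boldsymbol{a}} \boldsymbol{0}$, so $\mathfrak{m} \cdot \mathbb{F}_{\geqslant \boldsymbol{u}} \subseteq \mathbb{F}_{> \boldsymbol{u}}$ and the quotient is annihilated by $\mathfrak{m}$. Hence its $D$-length coincides with its $k$-dimension. I would then filter $M/(M \cap \mathbb{F}_{\geqslant \boldsymbol{v}})$ by the submodules $(M \cap \mathbb{F}_{\geqslant \boldsymbol{u}})/(M \cap \mathbb{F}_{\geqslant \boldsymbol{v}})$ indexed by $\boldsymbol{u} \leqslant_{\boldsymbol{a}} \boldsymbol{v}$. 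The successive quotients of this filtration are precisely the pieces $(M \cap \mathbb{F}_{\geqslant \boldsymbol{u}})/(M \cap \mathbb{F}_{> \boldsymbol{u}})$, and only the values $\boldsymbol{u} \in \vartheta(M) \cap H$ contribute non-trivially. Additivity of length across short exact sequences then matches the left-hand side with the displayed sum.

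The main obstacle is justifying this length additivity when the filtration is not finite, since the totally ordered index set $\{\boldsymbol{u} \in \mathbb{Z}^{d} : \boldsymbol{u} \leqslant_{\boldsymbol{a}} \boldsymbol{v}\}$ need not have order type $\omega$. I would handle this by a dichotomy. If the displayed sum is finite, then $\vartheta(M) \cap H$ itself must be finite: any infinite monotone subsequence inside $\vartheta(M) \cap H$ would produce an infinite strictly ascending or descending chain of submodules in $M/(M \cap \mathbb{F}_{\geqslant \boldsymbol{v}})$, and by the preceding step each jump $M \cap \mathbb{F}_{\geqslant \boldsymbol{u}_{i}} \supsetneq M \cap \mathbb{F}_{> \boldsymbol{u}_{i}}$ at a value $\boldsymbol{u}_{i} \in \vartheta(M)$ is strict. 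Thus the filtration collapses to a finite chain $M \cap \mathbb{F}_{\geqslant \boldsymbol{v}} \subsetneq M \cap \mathbb{F}_{\geqslant \boldsymbol{u}_{n}} \subsetneq \cdots \subsetneq M \cap \mathbb{F}_{\geqslant \boldsymbol{u}_{1}} = M$, and standard length additivity closes the argument. If instead the displayed sum is infinite, the same construction exhibits arbitrarily long chains, forcing the left-hand side to be $+\infty$ as well, so the equality holds in $[0,\infty]$.
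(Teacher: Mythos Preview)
The paper does not prove Lemma~\ref{Lemma 3.9}; it simply imports the statement from \cite[Lemma 3.11]{hernandez2018local}. So there is no in-paper argument to compare against, and the question is just whether your proposal is correct.

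It is. Your two-step reduction to the common quantity
\[
\sum_{\boldsymbol{u}\in H\cap\mathbb{Z}^{d}}\dim_{k}\!\left(\frac{M\cap\mathbb{F}_{\geqslant\boldsymbol{u}}}{M\cap\mathbb{F}_{>\boldsymbol{u}}}\right)
\]
is the standard argument. The Fubini count on the right-hand side is exactly right: for each fixed $\boldsymbol{u}$ the number of indices $h$ with $\boldsymbol{u}\in\vartheta^{(h)}(M)$ equals $\min\bigl(\dim_{k}(\cdots),[k_{\vartheta}:k]\bigr)$, and Remark~\ref{Remark(2.5)} caps the dimension at $[k_{\vartheta}:k]$, so the minimum is redundant. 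On the left-hand side, your observation that $\mathfrak{m}$ annihilates each graded piece is what makes $D$-length agree with $k$-dimension; note that ordinary domination (Remark~\ref{Remark(2.6)}) already suffices here, strong domination is not needed. Your handling of the possibly infinite filtration is also sound: since $\leqslant_{\boldsymbol{a}}$ is a total order on $\mathbb{Z}^{d}$, any finite subset of $\vartheta(M)\cap H$ yields a finite chain of submodules between $M\cap\mathbb{F}_{\geqslant\boldsymbol{v}}$ and $M$ whose successive quotients are precisely the nonzero pieces, and when $\vartheta(M)\cap H$ is infinite these chains have unbounded length, forcing both sides to equal $+\infty$. This is, up to presentation, the proof one finds in the cited source.
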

 We now discuss the notions of \textit{$p$-system} and \textit{$p$-bodies}.
\begin{definition}
\label{3.10}
A semigroup $S$ is called \textit{standard} if $S-S= \mathbb{Z}^{d}$, and the full dimensional cone generated by $S$ in $\mathbb{R}^{d}$ is pointed. Following the discussion in Remark \ref{Remark(2.6)} and from Definition \ref{OK valuation} the main example of a standard semigroup in $\mathbb{Z}^{d}$ is $\vartheta(D) $ associated to an $\boldsymbol{a}$-valuation $\vartheta$ that is \textit{OK} relative to some $d$-dimensional local domain $D$.
\begin{definition}
\label{P-system}
A collection of subsets $T_{\bullet}=\{T_{q}\}_{q=1} ^{\infty}$ of a semigroup $S$ indexed by $q=p^{e}$ satisfying\\
\textit{i.} $T_{q}$ is an ideal of S. ($T_{q} + S \subseteq S) $ and 
\textit{ii.} $pT_{q} \subseteq T_{pq}$
is called a \textit{$p$-system}.
\end{definition}
\begin{definition}
\label{P-body}
The $p$-body associated to a given $p$-system of ideals $T_{\bullet}$ of a  semigroup in $\mathbb{Z}^{d}$  is defined by \\
$\Delta( S, T_{\bullet})=\bigcup_{q=1}^{\infty} \Delta_{q}(S,T_{\bullet}) $  where
$\Delta_{q}(S,T_{\bullet})= \frac{1}{q}T_{q} + \Cone(S)$
\end{definition}
\begin{remark}
\label{Remark 3.13}
Although we have defined the notion of $p$-body for a $p$-system of ideals in any semigroup, in our situation, we are mostly concerned with $p$-system of ideals in the standard semigroup. Moreover, let $q_{1}=p^{e_{1}}$ and $ q_{2}=p^{e_{2}}$ and $e_{1} \leqslant e_{2}$, then
$p^{e_{2}-e_{1}} T_{p^{e_{1}}} \subseteq T_{p^{e_{2}}}$ (using Definition \ref{P-body}) this implies $\frac{T_{{p}^{e_{1}}}}{p^{e_{1}}} \subseteq \frac{T_{{p}^{e_{2}}}}{p^{e_{2}}} $. Therefore the $p$-body associated to a given $p$-system of ideals is an ascending union of sets.
\end{remark}
\end{definition}
\begin{example} \cite[Example 4.5]{hernandez2018local}
\label{Example 3.14}
Let $T$ be any subset of $S$, define $T_{q}=T+S$, for all $q=p^{e}$, then:\\
\textit{i.} Clearly $T_{q}+S \subseteq T_{q}$\\
\textit{ii.} $p T_{q}=(p T +p S) \subseteq (p T + S) \subseteq (T+ (p-1) T +S) \subseteq T+S $ which is equal to $T_{pq}$.\\
Therefore $T_{q} $ is a $p$-system of ideals for all $q=p^{e}$. Moreover, we have $\Delta_{q}(S, T_{\bullet})=\left(\frac{1}{q} T +\Cone(S)\right)$ and the closure of $\Delta(S,T_{\bullet})$ equals $\Cone(S)$.
\end{example}
\begin{example}
\label{Example 3.15}
Let $D$ be an OK domain for an OK valuation $\vartheta$ and assume $S=\vartheta(D)$ is the corresponding semigroup. Take an ideal $I$ in $D$ and define $T_{q}=q \vartheta(I) + S$. Following the same argument as in Example \ref{Example 3.14}, one can show that $\{T_{q}\}_{q=1}^{\infty}$  is a $p$-system of ideals. Moreover $\Delta(S,T_{\bullet})=\Delta_{q}(S,T_{\bullet})= \vartheta(I)+\Cone(S)$.
\end{example}

In the previous example, one can replace $\vartheta(I)$ with any arbitrary subset $T$ of $S$, and the corresponding $p$-body would be $T+\Cone(S)$. This shows that $p$ bodies need not be convex, and in practical applications, mostly, they are not.

\begin{remark}
\label{Remark 3.16}
Following Definition \ref{P-body} one can see that every $p$-body $\Delta$ is the union of countably many translates of $C=\Cone(S)$ therefore, it is Lebesgue measurable. If $H$ is a truncating halfspace of $C$, then the volume of ($\Delta \cap H$) is a well-defined real number.
\end{remark}
The following Theorem describes  $\Vol_{\mathbb{R}^{d}}(\Delta \cap H)$ for any truncating halfspace $H$ of $\Cone(S)$. This Theorem is one of the key ingredients in the proof of our main Theorem.
\begin{theorem} \cite[Theorem 4.10]{hernandez2018local}
\label{Theorem 3.17}
For a standard semigroup $S$ in $\mathbb{Z}^{d}$, a $p$-system $T_{\bullet}$ in $S$, and a truncating halfspace $H$  for $\Cone(S)$, we have:\\
\begin{equation*}
\label{Equation(3.2)}
  \lim_{q\to\infty}  \frac {\# (T_{q} \cap qH)}{q^{d}}= \Vol_{\mathbb{R}^{d}}(\Delta(S,T_{\bullet})\cap H)
\end{equation*}
\end{theorem}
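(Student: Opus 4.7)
The plan is to establish matching upper and lower bounds for $\#(T_q \cap qH)/q^d$, both converging to $\Vol(\Delta \cap H)$. First I would note that since $C = \Cone(S)$ is pointed and $H$ is a truncating halfspace, $C \cap H$ is bounded, so $\Delta \cap H \subseteq C \cap H$ is bounded as well. By Remark \ref{Remark 3.13}, the sets $\Delta_{q_0} \cap H$ form an ascending chain of measurable sets with union $\Delta \cap H$, so continuity of measure gives
\begin{equation*}
    \Vol(\Delta \cap H) \;=\; \lim_{q_0 \to \infty} \Vol(\Delta_{q_0} \cap H).
\end{equation*}
This reduces the problem to sandwiching $\#(T_q \cap qH)/q^d$ between an upper bound approaching $\Vol(\Delta \cap H)$ and, for each fixed $q_0$, a lower bound approaching $\Vol(\Delta_{q_0} \cap H)$.

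For the upper bound, because $0 \in C$ we have $T_q \subseteq q\Delta_q \subseteq q\Delta$, so $T_q \cap qH \subseteq q(\Delta \cap H) \cap \mathbb{Z}^d$. Given $\varepsilon > 0$, I would use outer regularity of Lebesgue measure to enclose the bounded set $\Delta \cap H$ in a finite union of open axis-aligned boxes $U$ with $\Vol(U) \leq \Vol(\Delta \cap H) + \varepsilon$. Since such $U$ is Jordan measurable, standard lattice-point asymptotics give $\#(qU \cap \mathbb{Z}^d)/q^d \to \Vol(U)$, hence $\limsup_q \#(T_q \cap qH)/q^d \leq \Vol(\Delta \cap H) + \varepsilon$. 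Letting $\varepsilon \to 0$ closes the upper bound.

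For the lower bound, fix $q_0 = p^{e_0}$. Iterating the $p$-system inclusion $pT_r \subseteq T_{pr}$ yields $\tfrac{q}{q_0} T_{q_0} \subseteq T_q$ for $q = p^e \geq q_0$, and the ideal property $T_q + S \subseteq T_q$ upgrades this to
\begin{equation*}
    T_q \;\supseteq\; \tfrac{q}{q_0} T_{q_0} + S.
\end{equation*}
Hence $\#(T_q \cap qH) \geq \#\bigl((\tfrac{q}{q_0} T_{q_0} + S) \cap qH\bigr)$. The set $\tfrac{q}{q_0} T_{q_0} + S$ sits inside $q\Delta_{q_0} \cap \mathbb{Z}^d = (\tfrac{q}{q_0} T_{q_0} + C) \cap \mathbb{Z}^d$ and differs from it only by the \textit{saturation defect} of $S$ in $C$, i.e., lattice points in $C$ not in $S$. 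The standard-semigroup hypothesis $S - S = \mathbb{Z}^d$ powers a Khovanskii-type density estimate showing this defect contributes only $o(q^d)$ lattice points in any bounded dilation, yielding
\begin{equation*}
    \liminf_{q \to \infty} \frac{\#(T_q \cap qH)}{q^d} \;\geq\; \Vol(\Delta_{q_0} \cap H).
\end{equation*}
Letting $q_0 \to \infty$ and combining with the continuity-of-measure identity above delivers the matching lower bound.

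The hardest step will be controlling the saturation defect $C \cap \mathbb{Z}^d \setminus S$: without $S - S = \mathbb{Z}^d$, the density of $S$ inside $C$ could fall short of the Lebesgue density and the lower bound would collapse. The Khovanskii-style estimate for standard semigroups, which replaces the more familiar finite-generation hypothesis in classical statements, is the technical heart of the argument.
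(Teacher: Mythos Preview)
The paper does not prove Theorem~\ref{Theorem 3.17}; it is cited from \cite[Theorem~4.10]{hernandez2018local} and invoked as a black box, so there is no in-paper argument to compare your proposal against.

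That said, your outline matches the strategy in \cite{hernandez2018local}. The upper bound via outer regularity and Jordan-measurable approximation is routine. For the lower bound you correctly fix $q_0$, use the $p$-system and ideal axioms to obtain $T_q \supseteq (q/q_0)T_{q_0} + S$ for $q \geq q_0$, and reduce the question to a density statement for $S$ inside $C$. The step you label ``Khovanskii-style'' is precisely the technical core of the Hern\'andez--Jeffries argument; note, however, that $S$ is \emph{not} assumed finitely generated here, so the classical Khovanskii theorem does not apply off the shelf, and one must extract the needed density purely from the standard-semigroup hypotheses ($S-S=\mathbb{Z}^d$ together with $C$ pointed and full-dimensional). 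Your sketch treats this as a black box, which is reasonable at the level of an outline, but in a complete proof this is where essentially all the work lies. One minor imprecision: the defect you need to control is not literally $\#((C\cap\mathbb{Z}^d\setminus S)\cap qH)$ but rather the shortfall between $(q/q_0)T_{q_0}+S$ and the lattice points of $q\Delta_{q_0}$; since only finitely many translates $t+C$ with $t\in (q/q_0)T_{q_0}$ meet $qH$, this does reduce to a bounded number of copies of the saturation defect, so your heuristic is sound once made precise.
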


Lazarsfeld and Musta\c{t}\u{a} in their paper \cite[Proposition 3.1]{lazarsfeld2009convex} have proved an approximation theorem similar to the following Theorem and using this in a global setup with $D$ a big divisor on an irreducible projective variety; they have recovered the Fujita Approximation Theorem \cite[Theorem 3.3]{lazarsfeld2009convex}. Since our approximation is similar to a different set of properties on the semigroup, we call it \textit{Fujita Type Approximation Theorem for $p$ system of ideals}.
\begin{theorem}
\label{Fujita type approximation}
Let S be a standard semigroup in $\mathbb{Z}^{d}$. If $T_{\bullet}$ is a $p$-system of ideals in $S$, and let $H $ is any truncating halfspace for $\Cone(S)$, then for any given $\epsilon > 0$, there exists $q_{0}$ such that if $q\geqslant q_{0}$ the following inequality hold. 
\begin{equation*}
  \lim_{e\to\infty} \frac {\# ((p^{e}T_{q}+S) \cap p^{e}qH)}{p^{ed}q^{d}}\geqslant \Vol_{\mathbb{R}^{d}}(\Delta(S,T_{\bullet})\cap H)-\epsilon
\end{equation*}
\end{theorem}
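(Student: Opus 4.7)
The plan is to reduce the assertion to a single application of Theorem \ref{Theorem 3.17} by manufacturing, for each fixed $q = p^{e_0}$, an auxiliary $p$-system whose $e$-th term is exactly $p^e T_q + S$. Concretely, I set $U_{p^e} := p^e T_q + S$ for $e \geqslant 0$ and verify the two axioms of Definition \ref{P-system}: the ideal property is immediate from $(p^e T_q + S) + S \subseteq p^e T_q + S$, and the Frobenius compatibility from $p(p^e T_q + S) = p^{e+1} T_q + pS \subseteq p^{e+1} T_q + S = U_{p^{e+1}}$. The $p$-body of $U_\bullet$ is particularly simple: since $\frac{1}{p^e} S \subseteq \Cone(S) = C$ and $C + C \subseteq C$, one computes $\Delta_{p^e}(S, U_\bullet) = \frac{1}{p^e}(p^e T_q + S) + C = T_q + C$ for every $e$, hence $\Delta(S, U_\bullet) = T_q + C$.

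Next I apply Theorem \ref{Theorem 3.17} to the $p$-system $U_\bullet$ together with the halfspace $H' := qH$, which is again truncating by Remark \ref{Remark(2.2)}. This yields
\begin{equation*}
\lim_{e \to \infty} \frac{\#\bigl((p^e T_q + S) \cap p^e q H\bigr)}{p^{ed}} = \Vol_{\mathbb{R}^d}\bigl((T_q + C) \cap qH\bigr).
\end{equation*}
Dividing both sides by $q^d$ and performing the change of variables $\boldsymbol{x} \mapsto \boldsymbol{x}/q$ inside the volume on the right (using $\frac{1}{q}C = C$) rewrites it as $\Vol_{\mathbb{R}^d}\bigl((\frac{1}{q}T_q + C) \cap H\bigr) = \Vol_{\mathbb{R}^d}(\Delta_q(S,T_\bullet) \cap H)$. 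Hence the limit appearing in the theorem is an exact equality with $\Vol_{\mathbb{R}^d}(\Delta_q(S,T_\bullet) \cap H)$, for each fixed $q$.

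The remaining step is to upgrade this to the stated $\varepsilon$-approximation by letting $q \to \infty$ through powers of $p$. By Remark \ref{Remark 3.13} the sets $\Delta_q(S,T_\bullet) \cap H$ form an ascending chain, measurable by Remark \ref{Remark 3.16}, whose union is $\Delta(S,T_\bullet) \cap H$. The key observation that makes Lebesgue monotone convergence applicable is that $T_q \subseteq S \subseteq C$ together with $C + C \subseteq C$ forces $\Delta_q(S,T_\bullet) \subseteq C$, so each $\Delta_q \cap H$ lies inside the bounded truncation $C \cap H$ and has finite volume. Monotone convergence then gives $\Vol_{\mathbb{R}^d}(\Delta_q \cap H) \to \Vol_{\mathbb{R}^d}(\Delta \cap H)$, and choosing $q_0$ so that this is within $\varepsilon$ concludes the argument. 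The only nontrivial point to justify is the containment $\Delta_q \subseteq C$, which anchors the monotone convergence step; everything else is a bookkeeping unpacking of Theorem \ref{Theorem 3.17}.
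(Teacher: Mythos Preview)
Your proof is correct and follows essentially the same route as the paper: construct the auxiliary $p$-system $\{p^eT_q+S\}_e$, invoke Theorem~\ref{Theorem 3.17} with the dilated halfspace $qH$, rescale to identify the resulting volume with $\Vol_{\mathbb{R}^d}(\Delta_q(S,T_\bullet)\cap H)$, and then pass to the limit in $q$ using the ascending-union structure from Remark~\ref{Remark 3.13}. Your explicit verification of the $p$-system axioms and the boundedness remark $\Delta_q\subseteq C$ are slightly more detailed than the paper's citation of Example~\ref{Example 3.15} and ``continuity from below,'' but the logic is identical; note incidentally that continuity from below for Lebesgue measure does not require the sets to have finite measure, so the containment $\Delta_q\subseteq C$ is not strictly needed for that step.
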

\begin{proof}
By Example \ref{Example 3.15} we notice that $T_{\bullet}^{'} = \{p^{e}T_{q}+S\}_{e=1}^{\infty}$ is a $p$-system of ideals. Therefore using Theorem \ref{Theorem 3.17} and Remark \ref{Remark(2.2)}  we have, \\
\begin{equation}
\label{Equation 3.1}
    \lim_{e\to\infty} \frac {\# ((p^{e}T_{q}+S) \cap p^{e}qH)}{p^{ed}}= \Vol_{\mathbb{R}^{d}}((T_{q}+\Cone(S))  \cap qH)
\end{equation}
Now note that $ \frac{\Vol_{\mathbb{R}^{d}}((T_{q}+\Cone(S) \cap qH)}{q^{d}} = \Vol_{\mathbb{R}^{d}}((\frac{T_{q}}{q} +\Cone(S)) \cap H) $ and from Remark \ref{Remark 3.13} and Remark \ref{Remark 3.16}, we know that these are an ascending union of measurable sets. Therefore,
\begin{equation}
\label{equation 3.2}
     \lim_{q\to\infty} \frac {\Vol_{\mathbb{R}^{d}}((T_{q}+\Cone(S)) \cap qH)}{q^{d}}= \Vol_{\mathbb{R}^{d}}(\Delta(S,T_{\bullet})  \cap H)
\end{equation}

We get Equation \ref{equation 3.2} by using the sub-additivity and continuity from below properties of the Lebesgue measure. Now combining Equation \ref{Equation 3.1}, Equation \ref{equation 3.2} for a chosen $\epsilon > 0$, we can find $q_{0}$ such that for all $q\geqslant q_{0}$ \\
$\lim_{e\to\infty} \frac {\# ((p^{e}T_{q}+S) \cap p^{e}qH)}{p^{ed}q^{d}}\geqslant \Vol_{\mathbb{R}^{d}}(\Delta(S,T_{\bullet})\cap H)-\epsilon$
\end{proof}

\section{p-families and a brief introduction to Hilbert Kunz Multiplicity}\label{Section 4}

In this section, all rings will be commutative, Noetherian, and of prime characteristic, $p > 0$.

Let $I$ be an ideal of a ring $R$, generated by $\{x_{1},...,x_{r}\}$ then \textit{$p^{e}$}th Frobenius power of $I$ denoted by $I^{[p^{e}]} $ is generated by $\{x_{1}^{p^{e}},..., x_{r}^{p^{e}}\}$.
\begin{definition}
\label{ P-family}
A sequence of ideals $I_{\bullet}=\{ I_{p^e}\}_{e=0}^{\infty}$ is called a $p$-family whenever $I_{q}^{[p]} \subseteq I_{pq}$, where $q=p^e$ for some $e \geqslant 0$.
\end{definition}
\begin{remark}
\label{Remark 4.2}
We note that if $(R,\mathfrak{m})$ is local, then every term in this family is $\mathfrak{m}$-primary if and only if $I_{1}$ is $\mathfrak{m}$ primary. 
\end{remark}
\begin{example}
\label{Example 4.3}
Let $I_{\bullet}=\{J^{[p^{e}]}\}_{e=0}^{\infty}$ for some fixed ideal $J$, then $I_{p^{e}}^{[p]}=J^{[p^{e+1}]}=I_{p^{e+1}} $, therefore $I_{\bullet}$ is a $p$-family.
\end{example}
\begin{lemma}
\label{Lemma 4.4}
Let $(R,\mathfrak{m})$ be a local ring and let $I_{\bullet}=\{I_{q}\}_{q=1}^{\infty}$ be a $p$-family of $\mathfrak{m}$-primary ideals. Then there exist a $c>0$ such that $\mathfrak{m}^{cq} \subseteq I_{q}$ and $\mathfrak{m}^{cp^{e}q} \subseteq I_{q}^{[p^{e}]}$ for all $q$ a power of $p$ and for all $e\geqslant 0$.
\end{lemma}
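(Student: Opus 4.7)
The natural starting point is to note that since $I_1$ is $\mathfrak{m}$-primary, there is an integer $c_0$ with $\mathfrak{m}^{c_0}\subseteq I_1$, and that iterating the $p$-family condition $I_q^{[p]}\subseteq I_{pq}$ yields $I_1^{[q]}\subseteq I_q$ for every $q=p^e$. Applying the Frobenius functor to $\mathfrak{m}^{c_0}\subseteq I_1$ then gives
\begin{equation*}
(\mathfrak{m}^{c_0})^{[q]}\subseteq I_1^{[q]}\subseteq I_q.
\end{equation*}

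To convert this into a statement about ordinary powers of $\mathfrak{m}$, I would invoke the classical pigeonhole estimate: if $\mathfrak{m}$ is generated by $n$ elements $y_1,\dots,y_n$, then any monomial $y_1^{a_1}\cdots y_n^{a_n}$ of total degree $\geqslant n(q-1)+1$ must have some $a_i\geqslant q$, so $\mathfrak{m}^{n(q-1)+1}\subseteq \mathfrak{m}^{[q]}$ and in particular $\mathfrak{m}^{nq}\subseteq \mathfrak{m}^{[q]}$. Combined with the characteristic-$p$ identity $(\mathfrak{m}^{c_0})^{[q]}=(\mathfrak{m}^{[q]})^{c_0}$ (which in turn rests on $(\sum f_i)^q=\sum f_i^q$, so that Frobenius of an ideal is generated by $q$-th powers of a generating set), this gives
\begin{equation*}
\mathfrak{m}^{nc_0 q}=(\mathfrak{m}^{nq})^{c_0}\subseteq (\mathfrak{m}^{[q]})^{c_0}=(\mathfrak{m}^{c_0})^{[q]}\subseteq I_q,
\end{equation*}
and setting $c:=nc_0$ proves the first inclusion $\mathfrak{m}^{cq}\subseteq I_q$.

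For the second inclusion $\mathfrak{m}^{cp^e q}\subseteq I_q^{[p^e]}$, the key observation is that one cannot simply invoke the $p$-family condition at index $qp^e$, because iterating $I_q^{[p]}\subseteq I_{pq}$ only produces $I_q^{[p^e]}\subseteq I_{qp^e}$, which is the wrong direction. Instead, I would take the $p^e$-th Frobenius power of the first inclusion to obtain $(\mathfrak{m}^{cq})^{[p^e]}\subseteq I_q^{[p^e]}$, and then apply the same pigeonhole estimate with $p^e$ in place of $q$:
\begin{equation*}
\mathfrak{m}^{ncqp^e}\subseteq (\mathfrak{m}^{[p^e]})^{cq}=(\mathfrak{m}^{cq})^{[p^e]}\subseteq I_q^{[p^e]}.
\end{equation*}
Replacing $c$ by $nc$ (i.e., taking $c=n^2c_0$) makes both inclusions hold simultaneously with the same constant.

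The whole argument is routine once the right building blocks are identified; the one place requiring a little care is the last step, where the \emph{direction} of the $p$-family containment forces us to derive the second inclusion from the first by applying Frobenius, rather than by chasing the family to a higher index.
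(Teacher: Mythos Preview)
Your proof is correct and follows essentially the same approach as the paper: start from $(\mathfrak{m}^{c_0})^{[q]}\subseteq I_q$, then combine the pigeonhole bound $\mathfrak{m}^{nN}\subseteq \mathfrak{m}^{[N]}$ with the identity $(\mathfrak{m}^{c_0})^{[N]}=(\mathfrak{m}^{[N]})^{c_0}$. The only organizational difference is that the paper runs the pigeonhole once with $N=p^eq$ to get $\mathfrak{m}^{bc_1p^eq}\subseteq (\mathfrak{m}^{c_1})^{[p^eq]}=((\mathfrak{m}^{c_1})^{[q]})^{[p^e]}\subseteq I_q^{[p^e]}$ directly, so that the first inclusion is the $e=0$ case and the constant is $c=bc_1$ rather than your $n^2c_0$.
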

\begin{proof}
We know from Remark \ref{Remark 4.2} that there exist $c_{1}$ such that $(\mathfrak{m}^{c_{1}})^{[q]} \subseteq I_{q}$ for all $q$, a power of $p$. Let $\mathfrak{m}$ be generated by $b$ elements then using pigeon-hole principle, $\mathfrak{m}^{bc_{1}p^{e}q} \subseteq (\mathfrak{m}^{c_{1}})^{[p^{e}q]} \subseteq ((\mathfrak{m}^{c_{1}})^{[q]})^{[p^{e}]} \subseteq I_{q}^{[p^e]}$, for all $q$ a power of $p$ and for all $e\geqslant 0$. So, letting $c=bc_{1}$ we get our desired conclusion.
\end{proof}
 We define the Frobenius map $F: R \to R$ by $F(r)=r^{p}$. This map turns $R$ into an $R$-module with a nonstandard action $r.x=r^{p}x$, and we call this module $F_{*}R$. Inductively one can define $F_{*}^{e} R $. We define an $R$-linear map $\phi_{e}: F_{*}^{e}R \to R$ which is a set map from $R$ to $R$ such that $\phi_{e}$ is additive and $\phi_{e}(x^{p^e}y)=x \phi(y)$.\\
The following is an interesting example of $p$-families.
\begin{example}
\label{Example 4.5}
Assume $R$ is reduced and $I$ is an $R$-ideal. Define for all $q=p^{e}$, $J_{q}=\{x \in R \mid  \phi(x) \in I,   \forall  \phi \in Hom_{R}(F_{*}^{e}(R), R)\}$. Notice that $J_{q}$ is an ideal for each $q=p^{e}$. Since $R$ is reduced we can identify $F_{*}^{e}(R)$ as $R^{\frac{1}{p^{e}}}$, the ring of $p^{e}$-th roots of the elements in $R$. We claim that this is a $p$-family of ideals. Indeed, let $\phi \in Hom_{R}(F_{*}^{e+1}R,R)$ and choose $r \in J_{q}$ then $\psi(r^{\frac{1}{p^{e}}}) \in I$ for all $ \psi \in Hom_{R}(F_{*}^{e}R,R)$ . Now $\phi\left((r^{p})^{\frac{1}{p^{e+1}}}\right)=\phi_{e}\left(r^{\frac{1}{p^{e}}}\right) \in I$, as $r \in J_{q}$. This implies $J_{q}^{[p]} \subseteq J_{pq}$.
\end{example}
\begin{remark}
\label{Remark 4.6}
If in the previous example we choose $I=\mathfrak{m}$, then we get a $p$-family of $m$-primary ideals because $\mathfrak{m}^{[q]} \subseteq J_{q}$. Use of this sequence of ideals was present in the work of Y. Yao \cite{yao2006observations} as well as F. Enescu and I. Aberbach \cite{aberbach2005structure}. Later this sequence of ideals was used in the work of K. Tucker \cite{tucker2012f} for proving that $F$-signature exists.

Let $(D,m,k)$ be a $d$-dimensional local OK-domain with OK-valuation $\vartheta$. Choose any $p$ family $I_{\bullet}$ of ideals, then \\
\textit{i.} $\vartheta(I_{q})$ is an ideal of the semigroup $\vartheta(D)=S$, and, \\
\textit{ii.} $p\vartheta(I_{q}) \subseteq \vartheta(I_{q}^{[p]}) \subseteq \vartheta(I_{pq})$. Therefore, $\{\vartheta(I_{q})\}_{q=1}^{\infty}$ is a $p$-system of ideals in $S$.
\end{remark}
\begin{remark}
\label{Remark(4.7)}
Following Definition \ref{Definition 3.7} and Remark \ref{Remark 3.8}, one can observe that although $\vartheta^{(h)}(I_{q})$ is an ideal but we need the extra condition that $\{m_{1},...,m_{h}\}$ are $k$-linear independent in $\vartheta^{(h)}(I_{q})$ implies $\{m_{1}^{p},...,m_{h}^{p}\}$ are $k$-linear independent in $\vartheta^{(h)}(I_{pq})$ but it doesn't always true unless we assume that $k$ is perfect. In (\cite[Lemma 3.12]{hernandez2018local}) they get around it by uniformly approximating $\vartheta^{h}(I_{q})$ with $\vartheta(I_{q})$ and proving the following result:
\end{remark}
\begin{proposition} \cite[Corollary 5.10]{hernandez2018local}
\label{Proposition 4.8}
 Let $D$ be a $d$-dimensional local OK-domain with OK-valuation $\vartheta$. For a $p$-family of ideals $I_{\bullet}$ in $D$ we have:
\begin{equation*}
\label{Equation(4.1)}
    \lim_{q\to\infty} \frac {(\vartheta^{(h)}(I_{q)} \cap qH)}{q^{d}}= \Vol_{\mathbb{R}^{d}}(\Delta(S,\vartheta (I_{\bullet}))\cap H)
\end{equation*}
where $S=\vartheta(D)$, $C= \Cone (S)$ and $H$ is any truncating halfspace of $C$.
\end{proposition}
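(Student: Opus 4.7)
The plan is to sandwich $\#(\vartheta^{(h)}(I_q)\cap qH)$ between an upper and a lower bound that both converge to $\Vol_{\mathbb{R}^d}(\Delta(S,\vartheta(I_\bullet))\cap H)$. The upper bound is immediate from the inclusion $\vartheta^{(h)}(I_q)\subseteq\vartheta^{(1)}(I_q)=\vartheta(I_q)$: by Remark \ref{Remark 4.6}, $\{\vartheta(I_q)\}_q$ is an honest $p$-system in the standard semigroup $S$, so Theorem \ref{Theorem 3.17} applied to it yields
$$\limsup_{q\to\infty}\frac{\#(\vartheta^{(h)}(I_q)\cap qH)}{q^d}\leq\lim_{q\to\infty}\frac{\#(\vartheta(I_q)\cap qH)}{q^d}=\Vol_{\mathbb{R}^d}(\Delta(S,\vartheta(I_\bullet))\cap H).$$

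For the lower bound, I would invoke the Fujita-type approximation (Theorem \ref{Fujita type approximation}) for the same $p$-system $\vartheta(I_\bullet)$: for any $\epsilon>0$ there exists $q_0$ such that for all $q\geq q_0$,
$$\lim_{e\to\infty}\frac{\#\bigl((p^e\vartheta(I_q)+S)\cap p^eqH\bigr)}{p^{ed}q^d}\geq\Vol_{\mathbb{R}^d}(\Delta(S,\vartheta(I_\bullet))\cap H)-\epsilon.$$
The geometric heart of the argument is to upgrade this count from $\vartheta(I_{p^eq})$-style data to $\vartheta^{(h)}(I_{p^eq})$-style data. Since $h\leq[k_\vartheta:k]$, I would fix $w_1,\ldots,w_h\in V_\vartheta$ whose residues are $k$-linearly independent in $k_\vartheta$, and choose $f\in D$ such that $fw_i\in D$ for every $i$ (possible because $\mathbb{F}=\mathrm{Frac}(D)$ contains each $w_i$). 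Set $\boldsymbol{c}=\vartheta(f)\in S$. Then for any $\boldsymbol{u}=p^e\vartheta(z)+\vartheta(s)\in p^e\vartheta(I_q)+S$ with $z\in I_q$ and $s\in D$, the $h$ elements $z^{p^e}s(fw_i)\in I_{p^eq}$ all have valuation $\boldsymbol{u}+\boldsymbol{c}$, and by Remark \ref{Remark(2.5)} their classes in $\mathbb{F}_{\geq\boldsymbol{u}+\boldsymbol{c}}/\mathbb{F}_{>\boldsymbol{u}+\boldsymbol{c}}$ remain $k$-linearly independent. Hence $p^e\vartheta(I_q)+S+\boldsymbol{c}\subseteq\vartheta^{(h)}(I_{p^eq})$.

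The translation by the fixed vector $\boldsymbol{c}$ distorts $\#\bigl(A\cap p^eqH\bigr)$ only in an $O((p^eq)^{d-1})$ boundary layer, which is negligible after dividing by $(p^eq)^d$. Taking $\liminf_{e\to\infty}$ along the cofinal subsequence $\{p^eq_0\}_e$ of powers of $p$ gives
$$\liminf_{q\to\infty}\frac{\#(\vartheta^{(h)}(I_q)\cap qH)}{q^d}\geq\Vol_{\mathbb{R}^d}(\Delta(S,\vartheta(I_\bullet))\cap H)-\epsilon$$
for every $\epsilon>0$, and combined with the upper bound this closes the sandwich. The main obstacle is exactly this lifting step: as flagged in Remark \ref{Remark(4.7)}, imperfectness of the residue field prevents $\{\vartheta^{(h)}(I_q)\}_q$ from being a $p$-system in its own right, and the remedy is to work instead with $\vartheta(I_\bullet)$ while absorbing the factor $h$ through a fixed $k_\vartheta$-basis cleared of denominators, paying only a boundary-sized error.
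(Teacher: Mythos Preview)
Your argument is correct, and its core idea---clearing denominators of a $k$-basis $w_1,\dots,w_h$ of $k_\vartheta$ by a fixed $f\in D$ to produce $h$ independent classes at a shifted valuation---is exactly the mechanism behind the result.  However, you are routing it through Theorem~\ref{Fujita type approximation} unnecessarily.  The paper does not give its own proof here (it cites \cite[Corollary 5.10]{hernandez2018local}), but Remark~\ref{Remark(4.7)} tells you what the cited argument does: it \emph{uniformly approximates} $\vartheta^{(h)}(I_q)$ by $\vartheta(I_q)$ via \cite[Lemma 3.12]{hernandez2018local}, and that uniform approximation is already contained in your own construction.

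Concretely, your shift works for \emph{any} ideal, not just for $I_q^{[p^e]}$: if $z\in I_q$ has $\vartheta(z)=\boldsymbol{u}$, then $z\cdot(fw_i)\in I_q$ for each $i$, all with valuation $\boldsymbol{u}+\boldsymbol{c}$, and their classes are $k$-independent by the same reasoning you give.  Thus
\[
\vartheta(I_q)+\boldsymbol{c}\ \subseteq\ \vartheta^{(h)}(I_q)\ \subseteq\ \vartheta(I_q)
\]
for a single $\boldsymbol{c}\in S$ independent of $q$.  Intersecting with $qH$ and using that the translation by $\boldsymbol{c}$ loses only the $O(q^{d-1})$ points in a fixed-width slab of the pointed cone $C$, you get
\[
\#(\vartheta(I_q)\cap qH)-O(q^{d-1})\ \le\ \#(\vartheta^{(h)}(I_q)\cap qH)\ \le\ \#(\vartheta(I_q)\cap qH),
\]
and a single application of Theorem~\ref{Theorem 3.17} to the $p$-system $\vartheta(I_\bullet)$ finishes the proof.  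Your passage through Frobenius powers, the Fujita-type bound, and cofinal subsequences is valid but superfluous; the uniform sandwich above is both what the cited source does and what your own $\boldsymbol{c}$-shift already delivers.
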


We will now discuss Hilbert-Kunz Multiplicity and some important properties.\\

For the rest of this section let $(R,\mathfrak{m},\mathbb{K})$ be a $d$-dimensional local ring and $q=p^{e}$, for some $e \in \mathbb{N}$.
\begin{definition}
\label{Hilbert-Kunz Multiplicity}
Let $I$ be an $\mathfrak{m}$-primary ideal of $R$. We define Hilbert-Kunz Multiplicity of $I$ by $e_{HK}(I,R)=\lim_{q \to \infty} \dfrac{\ell \left( R/I^{[q]}\right)}{q^{d}}$ 
\end{definition}
\begin{remark}
\label{Remark 4.10}
Let $J$ be an $\mathfrak{m}$-primary ideal of $R$, then  $\ell \left(R/J \right)$ is unaffected by completion. Therefore using the above definition, we have
\begin{equation*}
\label{Equation(4.2)}
    e_{HK}(J,R)=e_{HK}(J \hat{R},\hat{R})
\end{equation*}
\end{remark}
\begin{lemma}
\label{Lemma 4.11}
Let $I$ be an $\mathfrak{m}$-primary ideal of $R$ and $M$ be a finitely generated $R$-module. Then there exists a constant $\alpha > 0$ such that
\begin{equation*}
\label{Equation(4.3)}
    \ell_{R}\left(\frac{M}{I^{[p^{e}]}M}\right) \leqslant \alpha p^{e.\Dim M}
\end{equation*}
\end{lemma}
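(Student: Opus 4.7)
The plan is to reduce the Frobenius length to an ordinary $\mathfrak{m}$-adic length and then invoke classical Hilbert--Samuel theory. The bridge between the two is exactly the kind of containment recorded in Lemma~\ref{Lemma 4.4}: a uniform containment of the form $\mathfrak{m}^{cq}\subseteq I^{[q]}$, from which the desired bound follows since $M/\mathfrak{m}^{cq}M$ grows polynomially in $q$ with degree $\dim M$.

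First I would produce a constant $c>0$ such that $\mathfrak{m}^{cq}\subseteq I^{[q]}$ for every $q=p^{e}$. Since $I$ is $\mathfrak{m}$-primary there is $N\in\mathbb{N}$ with $\mathfrak{m}^{N}\subseteq I$, hence $(\mathfrak{m}^{N})^{[q]}\subseteq I^{[q]}$. If $b$ denotes the minimal number of generators of $\mathfrak{m}$, then by the pigeonhole principle $\mathfrak{m}^{bNq}\subseteq (\mathfrak{m}^{N})^{[q]}$, so taking $c=bN$ gives the desired containment. (This is essentially the argument already used in Lemma~\ref{Lemma 4.4}, applied to the $p$-family $\{I^{[q]}\}$.)

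Next I would use this containment to dominate the length. Since $\mathfrak{m}^{cq}\subseteq I^{[q]}$, we have $\mathfrak{m}^{cq}M\subseteq I^{[q]}M$, and hence
\begin{equation*}
\ell_{R}\!\left(\tfrac{M}{I^{[q]}M}\right)\;\leqslant\;\ell_{R}\!\left(\tfrac{M}{\mathfrak{m}^{cq}M}\right).
\end{equation*}
By the Hilbert--Samuel theorem applied to the finitely generated $R$-module $M$ with respect to the $\mathfrak{m}$-primary ideal $\mathfrak{m}$, the function $n\mapsto \ell_{R}(M/\mathfrak{m}^{n}M)$ agrees, for all sufficiently large $n$, with a polynomial in $n$ of degree $\dim M$. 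In particular there is a constant $\beta>0$ (depending only on $M$ and $\mathfrak{m}$) such that $\ell_{R}(M/\mathfrak{m}^{n}M)\leqslant \beta n^{\dim M}$ for every $n\geqslant 1$. Setting $n=cq=cp^{e}$ gives
\begin{equation*}
\ell_{R}\!\left(\tfrac{M}{I^{[p^{e}]}M}\right)\;\leqslant\;\beta(cp^{e})^{\dim M}\;=\;\alpha\, p^{e\cdot\dim M},
\end{equation*}
with $\alpha:=\beta c^{\dim M}$, which is the claim.

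There is no real obstacle in this argument; the only subtlety is producing the uniform $c$ in the first step, and this is a direct pigeonhole computation that has already appeared in Lemma~\ref{Lemma 4.4}. The rest is classical Hilbert--Samuel theory for a finitely generated module.
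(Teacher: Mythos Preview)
Your proof is correct and follows essentially the same approach as the paper: establish a uniform containment $\mathfrak{m}^{cq}\subseteq I^{[q]}$ via pigeonhole, then invoke Hilbert--Samuel for $M$ to bound $\ell_R(M/\mathfrak{m}^{cq}M)$ by a constant times $q^{\dim M}$. The only cosmetic difference is that the paper applies pigeonhole to the generators of $I$ (using $\mathfrak{m}^{c}\subseteq I\Rightarrow \mathfrak{m}^{cbq}\subseteq I^{bq}\subseteq I^{[q]}$) whereas you apply it to the generators of $\mathfrak{m}$; both routes are equally valid.
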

\begin{proof}
Since $I$ is $\mathfrak{m}$-primary there exist $c >0$ such that $\mathfrak{m}^{c} \subseteq I $. Let $b$ is the minimal number of generators of $I$, then $\mathfrak{m}^{p^{e}bc} \subseteq I^{p^{e}b} \subseteq I^{[p^{e}]}$ (using pigeon-hole principle). Therefore $\ell_{R}\left(\frac{M}{I^{[p^{e}]}}\right) \leqslant \ell_{R}\left(\frac{M}{\mathfrak{m}^{p^{e}bc}}\right)$ and the later agrees with a polynomial in $p^{e}bc$ of degree $\Dim M $. If the leading coefficient of this polynomial is $a$ then choose $a_{0} >> a$ implies $\ell_{R}(\frac{M}{\mathfrak{m}^{p^{e}bc}})$ is bounded above by , $a_{0}(b^{\Dim M}c^{\Dim M}p^{e \Dim M})$ taking $\alpha=a_{0}b^{\Dim M}c^{\Dim M}$ will finish the proof.
\end{proof}
\begin{corollary}
\label{Corollary 4.12}
Let $N$ be an ideal of $R$ and $A=R/N$, then for any $\mathfrak {m}$-primary ideal $I$ of $R$ there exist $\beta > 0$ such that \\
\begin{equation*}
   0\leqslant  \ell_{R}\left(R/I^{[p^{e}]}\right) -\ell_{A}\left(A/I^{[p^{e}]}\right) \leqslant \beta . {p^{e.\Dim{N}}}
\end{equation*}
\end{corollary}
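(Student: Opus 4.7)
The plan is to derive the estimate directly from the short exact sequence
\begin{equation*}
0 \longrightarrow N/(N \cap I^{[p^e]}) \longrightarrow R/I^{[p^e]} \longrightarrow R/(N + I^{[p^e]}) \longrightarrow 0.
\end{equation*}
Since $R/(N + I^{[p^e]}) \cong A/I^{[p^e]}A$ as $A$-modules and length is unchanged by restriction of scalars along $R \twoheadrightarrow A$, additivity of length in short exact sequences yields
\begin{equation*}
\ell_R\bigl(R/I^{[p^e]}\bigr) - \ell_A\bigl(A/I^{[p^e]}\bigr) \;=\; \ell_R\bigl(N/(N \cap I^{[p^e]})\bigr).
\end{equation*}
This identity delivers the lower bound of zero immediately, so only the upper bound remains.

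For the upper bound, I would use the obvious containment $I^{[p^e]} N \subseteq N \cap I^{[p^e]}$, which produces a surjection of $R$-modules $N/I^{[p^e]}N \twoheadrightarrow N/(N \cap I^{[p^e]})$ and hence
\begin{equation*}
\ell_R\bigl(N/(N \cap I^{[p^e]})\bigr) \;\leqslant\; \ell_R\bigl(N/I^{[p^e]}N\bigr).
\end{equation*}
Since $R$ is Noetherian, $N$ is a finitely generated $R$-module, so Lemma \ref{Lemma 4.11} applies with $M = N$ and produces a constant $\alpha > 0$ with $\ell_R(N/I^{[p^e]}N) \leqslant \alpha\, p^{e \cdot \Dim N}$. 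Setting $\beta := \alpha$ then gives the desired inequality.

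There is no serious obstacle here; the proof is essentially a bookkeeping exercise combining the three-term exact sequence with Lemma \ref{Lemma 4.11}. The only point that deserves a sentence of care is the identification $R/(N+I^{[p^e]}) \cong A/I^{[p^e]}A$ together with the equality of $R$-length and $A$-length for modules killed by $N$, which is what allows the middle term in the sequence to be compared to $\ell_A(A/I^{[p^e]})$ as written in the statement.
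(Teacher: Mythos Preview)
Your proof is correct and follows essentially the same route as the paper: both use the short exact sequence $0 \to N/(N\cap I^{[p^e]}) \to R/I^{[p^e]} \to A/I^{[p^e]}A \to 0$, the containment $I^{[p^e]}N \subseteq N \cap I^{[p^e]}$, and Lemma~\ref{Lemma 4.11} applied to $M=N$. If anything, your version is slightly more explicit about the identification $R/(N+I^{[p^e]}) \cong A/I^{[p^e]}A$ and the equality of $R$-length and $A$-length for $A$-modules.
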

\begin{proof}
Take the following exact sequence \\
\begin{equation*}
\label{Equation(4.4)}
    0 \to N/(N \cap I^{[p^{e}]}) \to R/{I^{[p^{e}]}} \to A/{I^{[p^{e}]}A} \to 0 
\end{equation*}
Using the length formula, we have \\
\begin{equation}
\label{Equation(4.5)}
    \ell_{R}\left(R/{I^{[p^{e}]}}\right)-\ell_{A}\left(A/{I^{[p^{e}]}}\right)=\ell_{R}\left(N/(N \cap I^{[p^{e}]})\right)
\end{equation}
Note that $\ell_{R}\left(A/{I^{[p^{e}]}}\right) = \ell_{A}\left(A/{I^{[p^{e}]}}\right)$ and $I^{[p^{e}]}N \subseteq I^{[p^{e}]} \cap N$. Now using Lemma \ref{Lemma 4.11}, we have from Equation \ref{Equation(4.5)},       $\ell_{R}\left(R/{I^{[p^{e}]}}\right)-\ell_{A}\left(A/{I^{[p^{e}]}}\right) \leqslant \ell_{R}\left(N/{I^{[p^{e}]}N}\right) \leqslant \beta p^{e.\Dim N}$
\end{proof}
\begin{definition}
\label{Definition 4.13}
Let $f,g : \mathbb{N} \to \mathbb{R}$ be real valued functions from the set of non-negative integers. We say $f(n)=O(g(n))$ if there exists a positive constant $C$ such that $|f(n)| \leqslant C g(n)$ for all $n \gg 0$ and we say $f(n)=o(g(n))$ if $ \lim_{n \to \infty} \dfrac{f(n)}{g(n)}=0$
\end{definition}
Now we will mention some results on Hilbert Kunz Multiplicity without proof; for a detailed verification, we refer readers to \cite{huneke2013hilbert}.
\begin{proposition}
\label{Proposition 4.14}
Let \\
\begin{equation*}
    0 \to N \to M \to K \to 0
\end{equation*}
be a short exact sequence of finitely generated $R$-modules. Then \\
\begin{equation}
\label{Equation(4.6)}
    \ell_{R}\left(M/{I^{[q]}M}\right)=\ell_{R}\left(N/{I^{[q]}N}\right)+\ell_{R}\left(K/{I^{[q]}K}\right)+O(q^{d-1})
\end{equation}
\end{proposition}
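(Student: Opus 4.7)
My plan is to study the difference $\ell_R(M/I^{[q]}M) - \ell_R(N/I^{[q]}N) - \ell_R(K/I^{[q]}K)$ via a right-exact Tor sequence, reducing the claim to a length bound on a single kernel module. Tensoring the short exact sequence $0 \to N \to M \to K \to 0$ with $R/I^{[q]}$ produces
\[
N/I^{[q]}N \xrightarrow{\iota_q} M/I^{[q]}M \to K/I^{[q]}K \to 0,
\]
whose left-hand kernel is exactly $(N \cap I^{[q]}M)/I^{[q]}N$. Adding lengths in the resulting four-term exact sequence reduces the proposition to proving the single estimate
\[
\ell_R\bigl((N \cap I^{[q]}M)/I^{[q]}N\bigr) = O(q^{d-1}).
\]

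To control this quantity I would invoke the Artin--Rees lemma applied to $N \subseteq M$ with respect to $I$: there exists a constant $c \geq 1$ such that $I^n M \cap N = I^{n-c}(I^c M \cap N)$ for all $n \geq c$. Since $I^{[q]} \subseteq I^q$, this yields $N \cap I^{[q]}M \subseteq I^{q-c}L$ for $q > c$, where $L := I^c M \cap N$ is a fixed finitely generated submodule of $N$. Consequently $(N \cap I^{[q]}M)/I^{[q]}N$ is a subquotient of $I^{q-c}L/I^{[q]}L$.

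The main obstacle is the final length bound $\ell_R(I^{q-c}L/I^{[q]}L) = O(q^{d-1})$. A naive split $\ell_R(L/I^{[q]}L) - \ell_R(L/I^{q-c}L)$ only yields $O(q^{\Dim L})$, because the leading Hilbert--Kunz coefficient $e_{HK}(I,L)$ and the leading Hilbert--Samuel coefficient $e(I,L)/(\Dim L)!$ generally disagree. To sharpen the estimate to $O(q^{d-1})$, my plan is to filter $L$ by cyclic prime quotients $0 = L_0 \subset \cdots \subset L_t = L$ with $L_i/L_{i-1} \cong R/P_i$: factors with $\Dim R/P_i < d$ contribute at most $O(q^{d-1})$ immediately by Lemma~\ref{Lemma 4.11} applied in $R/P_i$, while the full-dimensional factors require reducing modulo $P_i$ and applying an induction on $d$ in the domain $R/P_i$, where the characteristic-$p$ behavior of Frobenius powers is used to absorb the mismatched leading terms. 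This inductive full-dimensional case is the technically delicate step, and is the reason the standard proof in \cite{huneke2013hilbert} proceeds by induction on $d$ rather than via a one-shot Artin--Rees estimate.
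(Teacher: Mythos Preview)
First, note that the paper does not actually prove Proposition~\ref{Proposition 4.14}: immediately before the statement it writes ``we will mention some results on Hilbert Kunz Multiplicity without proof; for a detailed verification, we refer readers to \cite{huneke2013hilbert}.'' So there is no in-paper argument to compare against, and I assess your proposal on its own.

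Your reduction to bounding $\ell_R\bigl((N\cap I^{[q]}M)/I^{[q]}N\bigr)$ via the four-term Tor sequence is correct and is the standard opening move. The genuine gap is the step after Artin--Rees. Replacing $I^{[q]}M$ by $I^{q}M$ discards all of the Frobenius structure, and the resulting upper bound $\ell_R(I^{q-c}L/I^{[q]}L)$ is simply \emph{not} $O(q^{d-1})$. Already for $R=\mathbb{K}[x,y]$, $I=\mathfrak{m}=(x,y)$, $L=R$, $c=0$ one has
\[
\ell_R\bigl(\mathfrak{m}^{q}/\mathfrak{m}^{[q]}\bigr)=\ell_R\bigl(R/\mathfrak{m}^{[q]}\bigr)-\ell_R\bigl(R/\mathfrak{m}^{q}\bigr)=q^{2}-\binom{q+1}{2}=\frac{q(q-1)}{2}=\Theta(q^{d}).
\]
Your prime-filtration repair cannot rescue this: the full-dimensional factors $R/P_i$ with $\dim R/P_i=d$ exhibit exactly the same $\Theta(q^{d})$ discrepancy between ordinary and Frobenius powers (the display above is the case $P_i=(0)$), and ``induction on $d$ in the domain $R/P_i$'' provides no dimension drop since $\dim R/P_i=d$. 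You are, for those factors, trying to establish a bound that is false. Whatever inductive structure the argument in \cite{huneke2013hilbert} has, it never routes through the estimate $\ell_R(I^{q-c}L/I^{[q]}L)=O(q^{d-1})$; the Tor kernel is controlled by methods that keep the Frobenius-power shape of $I^{[q]}$ intact rather than relaxing it to an ordinary power.
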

Now dividing Equation \ref{Equation(4.6)} by $q^{d}$ and taking $q\to \infty$ we have\\
\begin{equation*}
\label{Equation(4.7)}
    e_{HK}(I,M)=e_{HK}(I,K)+e_{HK}(I,N)
\end{equation*}
\begin{theorem}
\label{Theorem 4.15}
Let $I$ be an $\mathfrak{m}$-primary ideal of $R$ and $M$ is a finitely generated $R$-module. Let $\Gamma$ be the set of minimal prime ideals $P$ of $R$ such that $ \Dim \left(R/P\right)=\Dim (R)$. Then \\
\begin{equation*}
\label{Equation(4.8)}
    e_{HK}(I,M)= \sum_{p \in \Gamma} e_{HK}\left(I,R/P\right) \ell(M_{P})
\end{equation*}
\end{theorem}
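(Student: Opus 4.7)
The plan is to reduce the theorem to the additivity property recorded in Proposition \ref{Proposition 4.14} by means of a prime filtration of $M$, followed by a localization count.

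First, since $M$ is a finitely generated module over the Noetherian ring $R$, I would choose a prime filtration
\begin{equation*}
0 = M_0 \subset M_1 \subset \cdots \subset M_n = M, \qquad M_i/M_{i-1} \cong R/Q_i
\end{equation*}
with $Q_i \in \Spec R$. Applying Proposition \ref{Proposition 4.14} to each short exact sequence $0 \to M_{i-1} \to M_i \to R/Q_i \to 0$ and iterating gives
\begin{equation*}
\ell_R(M/I^{[q]}M) = \sum_{i=1}^n \ell_R\bigl((R/Q_i)/I^{[q]}(R/Q_i)\bigr) + O(q^{d-1}),
\end{equation*}
so dividing by $q^d$ and passing to the limit yields $e_{HK}(I,M) = \sum_{i=1}^n e_{HK}(I,R/Q_i)$.

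Next I would discard the lower-dimensional contributions. If $\Dim R/Q_i < d$, Lemma \ref{Lemma 4.11} applied to $R/Q_i$ gives $\ell_R((R/Q_i)/I^{[q]}(R/Q_i)) \leqslant \alpha\, q^{\Dim R/Q_i}$, whence $e_{HK}(I, R/Q_i) = 0$. On the other hand, if $\Dim R/Q_i = d$, then any proper specialization of $Q_i$ would have strictly smaller codimension quotient, so $Q_i$ must be a minimal prime of $R$ lying in $\Gamma$.

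The core combinatorial step is to identify the multiplicity with which each $P \in \Gamma$ appears in the prime filtration. To this end I localize the whole filtration at $P$: the sequence $0 = (M_0)_P \subseteq \cdots \subseteq (M_n)_P = M_P$ has successive quotients $(R/Q_i)_P$. Since $P$ is a minimal prime, $Q_i \subseteq P$ forces $Q_i = P$, so $(R/Q_i)_P$ equals $R_P/PR_P$ (of length one over $R_P$) when $Q_i = P$ and vanishes otherwise. Additivity of length along the localized filtration therefore gives $\ell_{R_P}(M_P) = \#\{\, i : Q_i = P\,\}$, and combining with the reduction above yields
\begin{equation*}
e_{HK}(I, M) = \sum_{P \in \Gamma} \#\{\, i : Q_i = P\,\}\cdot e_{HK}(I, R/P) = \sum_{P \in \Gamma} \ell(M_P)\, e_{HK}(I, R/P),
\end{equation*}
which is the desired formula.

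The main technical obstacle I anticipate is controlling the iteration of Proposition \ref{Proposition 4.14}: since each application introduces an $O(q^{d-1})$ error term, one must verify that summing $n$ such errors still gives an $O(q^{d-1})$ remainder, which is harmless after dividing by $q^d$. This is fine because $n$ is fixed (independent of $q$), but it is the only place where care is required; the vanishing in low dimension and the localization count are both routine consequences of the tools already developed in this section.
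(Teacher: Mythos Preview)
Your proposal is correct and follows precisely the approach the paper outlines: take a prime filtration of $M$, use Proposition \ref{Proposition 4.14} for additivity, discard the factors with $\Dim R/Q_i < d$ via Lemma \ref{Lemma 4.11}, and count the remaining occurrences of each $P\in\Gamma$ by localizing. The paper does not give a full proof (it refers to \cite{huneke2013hilbert}), but your write-up is exactly the intended argument, and your remark about the accumulated $O(q^{d-1})$ errors being harmless since $n$ is fixed is the only subtlety worth noting.
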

 The main idea to prove this result is to take a prime filtration of $M$, use the fact that $e_{HK}(I, R/Q)=0$ if $\Dim R/Q < \Dim R$ and then use Proposition \ref{Proposition 4.14}.
\begin{remark}
\label{Remark(4.16)}
In the Theorem \ref{Theorem 4.15}, if we choose $M=R$ and further assume  $R$ is reduced. Then
\begin{equation*}
\label{Equation(4.9)}
    e_{HK}(I,R)= \sum_{p \in \Gamma} e_{HK}\left(I,R/P\right)
\end{equation*}
because a reduced local ring with Krull dimension $0$ is a field, therefore $\ell(R_{P})=1$
\end{remark}

\section{Volume=Multiplicity Formula } \label{Section 5}
In this section we prove our main result, a general \textit{Volume=Multiplicity} formula for $p$-families of $\mathfrak{m}$-primary ideals. We begin with the following important  lemma.
\begin{lemma} \cite[Lemma 5.21]{hernandez2018local}
\label{Lemma 5.1}
Let $(R,\mathfrak{m},\mathbb{K})$ be a $d$-dimensional reduced local ring of positive characteristic, and $I_{\bullet}$ be a sequence of ideals of $R$ indexed by the powers of $p$ such that $\mathfrak{m}^{cq}\subseteq I_{q}$ for some positive integer $c$ and for all $q=p^{e}, e \in \mathbb{N}$. Let $P_{1},...,P_{n}$ be the minimal primes of $R$, then there exists $\delta >0$ such that for all $q$,
\begin{equation*}
    \left|\sum_{i=1}^{n} \ell_{R_{i}} \left(R_{i}/{I_{q}R_{i}}\right)-\ell_{R}\left(R/{I_{q}}\right)\right| \leqslant \delta.q^{d-1}
\end{equation*}
where $R_{i}=R/P_{i}$, for all $i \in \{1,...,n\}$
\end{lemma}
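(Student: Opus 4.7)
My plan is to exploit the short exact sequence $0 \to R \to M \to K \to 0$, where $M := \bigoplus_{i=1}^{n} R/P_{i}$ and the first map is the diagonal $r \mapsto (r+P_{1}, \ldots, r+P_{n})$, which is injective because $R$ reduced forces $\bigcap_{i} P_{i} = 0$. Tensoring with $R/I_{q}$ and using $\mathrm{Tor}_{1}^{R}(R, -) = 0$, the Tor long exact sequence collapses to
\begin{equation*}
0 \to T \to R/I_{q} \to M/I_{q}M \to K/I_{q}K \to 0,
\end{equation*}
where $T$ is the image of the connecting map $\mathrm{Tor}_{1}^{R}(K, R/I_{q}) \to R/I_{q}$. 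Since $\ell_{R}(M/I_{q}M) = \sum_{i} \ell_{R_{i}}(R_{i}/I_{q}R_{i})$, additivity of length on this four-term sequence gives
\begin{equation*}
\Bigl| \sum_{i=1}^{n} \ell_{R_{i}}(R_{i}/I_{q}R_{i}) - \ell_{R}(R/I_{q}) \Bigr| \le \ell_{R}(K/I_{q}K) + \ell_{R}(T),
\end{equation*}
reducing the problem to bounding each of these by $O(q^{d-1})$.

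The unifying tool will be a single nonzerodivisor $s \in R$ with $sK = 0$, which I would produce via localization. At any minimal prime $P_{j}$, reducedness forces $R_{P_{j}}$ to be a field equal to $(R/P_{j})_{P_{j}}$, while $(R/P_{i})_{P_{j}} = 0$ for $i \neq j$ by incomparability of minimal primes; hence $R \hookrightarrow M$ becomes an isomorphism after this localization and $K_{P_{j}} = 0$. Taking a common annihilator of the finitely many generators of $K$ then produces, for each $j$, an element of $\mathrm{ann}(K) \setminus P_{j}$, and prime avoidance yields $s \in \mathrm{ann}(K) \setminus \bigcup_{i} P_{i}$. In the reduced ring $R$ such an $s$ is automatically a nonzerodivisor, so Krull's principal ideal theorem gives $\dim R/sR = d-1$, and in particular $\dim K \le d-1$.

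Both bounds then follow quickly. The hypothesis $\mathfrak{m}^{cq} \subseteq I_{q}$ together with the Hilbert--Samuel function of $K$ yields $\ell_{R}(K/I_{q}K) \le \ell_{R}(K/\mathfrak{m}^{cq}K) = O(q^{\dim K}) = O(q^{d-1})$. For $T$, the vanishing $sK = 0$ forces $s\cdot \mathrm{Tor}_{1}^{R}(K, R/I_{q}) = 0$, hence $sT = 0$, so $T$ embeds in $(I_{q} :_{R} s)/I_{q}$. The kernel-image-cokernel sequence of multiplication by $s$ on $R/I_{q}$,
\begin{equation*}
0 \to (I_{q} :_{R} s)/I_{q} \to R/I_{q} \xrightarrow{\,\cdot s\,} R/I_{q} \to R/(I_{q}+sR) \to 0,
\end{equation*}
then yields $\ell_{R}\bigl((I_{q} :_{R} s)/I_{q}\bigr) = \ell_{R}\bigl(R/(I_{q}+sR)\bigr) \le \ell_{R}\bigl(R/(\mathfrak{m}^{cq}+sR)\bigr)$, and this last length is $O(q^{d-1})$ since $\dim R/sR = d-1$. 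Summing the two estimates produces the desired $\delta$.

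The main obstacle is engineering the single nonzerodivisor $s$ so that it simultaneously witnesses $\dim K < d$ (controlling $K/I_{q}K$) and kills $\mathrm{Tor}_{1}^{R}(K, R/I_{q})$ (controlling $T$). Routing both estimates through the same $s$ is precisely what lets the argument avoid any catenary or equidimensionality hypothesis on $R$, since one never needs to compare $\dim R/P$ across distinct primes directly.
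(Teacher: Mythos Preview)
The paper does not supply its own proof of this lemma; it is quoted verbatim from \cite[Lemma~5.21]{hernandez2018local} and used as a black box in Step~2 of Theorem~\ref{Volume =Multiplicity Formula}. So there is no in-paper argument to compare against.

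That said, your proof is correct and self-contained. The diagonal embedding $R \hookrightarrow \bigoplus_i R/P_i$ with cokernel $K$, followed by the Tor long exact sequence, cleanly reduces the problem to bounding $\ell_R(K/I_qK)$ and $\ell_R(T)$; producing a single nonzerodivisor $s \in \Ann(K)$ via localization at minimal primes and prime avoidance is exactly the right move, since it simultaneously forces $\dim K \le d-1$ and $s\cdot\mathrm{Tor}_1^R(K,R/I_q)=0$. The kernel--cokernel trick for multiplication by $s$ on the finite-length module $R/I_q$ then closes the argument. Two minor remarks: (i) the appeal to ``Krull's principal ideal theorem'' is really the standard fact that a non-unit nonzerodivisor in a Noetherian local ring drops dimension by exactly one (your argument only needs $\dim R/sR \le d-1$, which follows since $s$ avoids every minimal prime); (ii) the $O(q^{d-1})$ bounds from Hilbert--Samuel are asymptotic, but finitely many small values of $q$ are absorbed into $\delta$. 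Neither affects correctness.

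Your observation that the argument needs no catenary or equidimensionality hypothesis is apt, and in fact nothing here uses the characteristic-$p$ indexing either: the same bound holds for any family $\{I_n\}$ with $\mathfrak{m}^{cn} \subseteq I_n$.
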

\begin{remark}
\label{Remark 5.2}
Suppose $(R,\mathfrak{m})$ be a $d$-dimensional reduced local ring of chracteristic $p>0$ and $I$ is an $\mathfrak{m}$-primary ideal. Since $R$ is reduced and $I_{\bullet}=\{I^{[p^{e}]}\}_{e=0}^{\infty}$ is a $p$-family of $\mathfrak{m}$ primary ideals, using Lemma \ref{Lemma 5.1}, Lemma \ref{Lemma 4.4} one can give an alternative proof of Remark \ref{Remark(4.16)}.
\end{remark}
The following is the Main Theorem of this paper.
\begin{theorem}
\label{Volume =Multiplicity Formula}
Let  $(R,\mathfrak{m},\mathbb{K})$ be a $d$-dimensional local ring of characteristic $p>0$. If the  $R$-module dimension of the nilradical of $\hat{R}$ is less than $d$, then for any $p$-family of $\mathfrak{m}$-primary ideals $I_{\bullet}=\{I_{q}\}_{q=1}^{\infty}$, we have
\begin{equation*}
   \lim_{q\to\infty} \frac{\ell_{R}(R/I_{q})}{q^{d}}=\lim_{q\to\infty} \frac{e_{HK}(I_{q},R)}{(q)^{d}}
\end{equation*}
\end{theorem}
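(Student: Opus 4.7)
The plan follows the three-step Cutkosky-type outline advertised in the introduction.

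\textbf{Reduction to a complete local domain.} Completion preserves both sides of the target equality (Remark \ref{Remark 4.10}) and the hypothesis $\Dim N(\hat R) < d$ is posed on $\hat R$, so I may assume $R$ is complete. Setting $N = N(R)$ and $A = R/N$, the short exact sequence
\begin{equation*}
0 \to N/(N \cap I_q^{[p^e]}) \to R/I_q^{[p^e]} \to A/I_q^{[p^e]}A \to 0
\end{equation*}
together with Lemma \ref{Lemma 4.4} and Lemma \ref{Lemma 4.11} bounds the length gap by a constant times $(qp^e)^{\Dim N}$. Since $\Dim N < d$, dividing by $p^{ed}$ and letting $e \to \infty$ gives $e_{HK}(I_q,R) = e_{HK}(I_q A, A)$ for every $q$, while the $e = 0$ specialization followed by division by $q^d$ gives $\Vol_R(I_\bullet) = \Vol_A(I_\bullet A)$. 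So I may also assume $R$ is reduced. Letting $P_1,\dots,P_n$ be the minimal primes of $R$ and $R_i = R/P_i$, Lemma \ref{Lemma 5.1} splits $\Vol_R(I_\bullet) = \sum_{\Dim R_i = d} \Vol_{R_i}(I_\bullet R_i)$, and Theorem \ref{Theorem 4.15} with Remark \ref{Remark(4.16)} splits $e_{HK}(I_q,R) = \sum_{\Dim R_i = d} e_{HK}(I_q R_i, R_i)$ termwise. It therefore suffices to treat a complete local domain $D$ of dimension $d$.

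\textbf{Lengths via an OK-valuation.} By Example \ref{Example 3.5}, $D$ is an OK-domain; fix an OK-valuation $\vartheta$ and set $S = \vartheta(D)$, $C = \Cone(S)$, $N_0 = [k_\vartheta : k]$, with $\boldsymbol{v}$ the vector from Definition \ref{OK valuation}. Lemma \ref{Lemma 4.4} yields $c > 0$ with $\mathfrak{m}^{cq} \subseteq I_q$ and $\mathfrak{m}^{cqp^e} \subseteq I_q^{[p^e]}$, so condition (iii) of Definition \ref{OK valuation} gives $D \cap \mathbb{F}_{\geqslant cq\boldsymbol{v}} \subseteq I_q$ and $D \cap \mathbb{F}_{\geqslant cqp^e\boldsymbol{v}} \subseteq I_q^{[p^e]}$. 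Putting $H_0 = \{\boldsymbol{u} : (\boldsymbol{a},\boldsymbol{u}) < c(\boldsymbol{a},\boldsymbol{v})\}$ and applying Lemma \ref{Lemma 3.9} to the pairs $(D, D \cap \mathbb{F}_{\geqslant cq\boldsymbol{v}})$ and $(I_q, I_q \cap \mathbb{F}_{\geqslant cq\boldsymbol{v}})$, then subtracting, yields
\begin{equation*}
\ell_D(D/I_q) = \sum_{h=1}^{N_0} \#\bigl((\vartheta^{(h)}(D) \setminus \vartheta^{(h)}(I_q)) \cap q H_0\bigr),
\end{equation*}
with the analogous identity for $I_q^{[p^e]}$ using the halfspace $p^e q H_0$ in place of $q H_0$.

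\textbf{Volumes and the squeeze.} Proposition \ref{Proposition 4.8}, applied both to $I_\bullet$ and to the constant family $I_q = D$, after dividing by $q^d$ and sending $q \to \infty$ gives
\begin{equation*}
\Vol_R(I_\bullet) = N_0 \bigl(\Vol(C \cap H_0) - \Vol(\Delta(S,\vartheta(I_\bullet)) \cap H_0)\bigr).
\end{equation*}
For each fixed $q$, the sequence $\{I_q^{[p^e]}\}_e$ is itself a $p$-family (Example \ref{Example 4.3}); applying Proposition \ref{Proposition 4.8} to it and dividing by $(qp^e)^d$ before sending $e \to \infty$ gives
\begin{equation*}
\frac{e_{HK}(I_q,D)}{q^d} = N_0 \Vol(C \cap H_0) - N_0 \Vol(\Phi_q \cap H_0), \qquad \Phi_q := \bigcup_{e \geqslant 0} \tfrac{1}{qp^e}\vartheta(I_q^{[p^e]}) + C.
\end{equation*}
Iterating $I_q^{[p]} \subseteq I_{pq}$ gives $I_q^{[p^e]} \subseteq I_{qp^e}$, whence $\Phi_q \subseteq \Delta(S,\vartheta(I_\bullet))$; the $e = 0$ term of $\Phi_q$ equals $\tfrac{1}{q}\vartheta(I_q) + C = \Delta_q(S,\vartheta(I_\bullet))$, giving $\Delta_q(S,\vartheta(I_\bullet)) \subseteq \Phi_q$. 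By Remark \ref{Remark 3.13} the sets $\Delta_q(S,\vartheta(I_\bullet))$ ascend (in $q$) to $\Delta(S,\vartheta(I_\bullet))$, so continuity of Lebesgue measure from below squeezes $\Vol(\Phi_q \cap H_0) \to \Vol(\Delta(S,\vartheta(I_\bullet)) \cap H_0)$ and matches the two displayed expressions.

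The essential obstacle is the interchange of limits in $\lim_q e_{HK}(I_q,R)/q^d = \lim_q \lim_e \ell(R/I_q^{[p^e]})/(q^d p^{ed})$: the containment $I_q^{[p^e]} \subseteq I_{qp^e}$ yields easily $\Vol_R(I_\bullet) \leqslant \lim_q e_{HK}(I_q,R)/q^d$, but the matching lower bound requires approximating $\Delta(S,\vartheta(I_\bullet))$ by the fixed-$q$ truncations $\Phi_q$. This is the content of Theorem \ref{Fujita type approximation}, realized above by the sandwich $\Delta_q \subseteq \Phi_q \subseteq \Delta$.
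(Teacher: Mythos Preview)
Your proof is correct and follows essentially the same route as the paper: reduction to a complete local domain via completion, killing the nilradical, and splitting over minimal primes; then, in the OK-domain case, expressing lengths by Lemma~\ref{Lemma 3.9}, passing to volumes via Proposition~\ref{Proposition 4.8}, and squeezing. The only organizational difference is that the paper invokes Theorem~\ref{Fujita type approximation} together with the inclusion $p^{e}\vartheta(I_{q})+S\subseteq\vartheta(I_{q}^{[p^{e}]})$ to obtain the lower bound, whereas you inline that argument directly as the sandwich $\Delta_{q}(S,\vartheta(I_{\bullet}))\subseteq\Phi_{q}\subseteq\Delta(S,\vartheta(I_{\bullet}))$ and appeal to continuity from below; as you note yourself, this is exactly the content of Theorem~\ref{Fujita type approximation}.
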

\begin{proof}

\textbf{Step 1: The case of OK-Domain:}

Assume $R$ is an OK-domain and let $\vartheta:\mathbb{F}^{\times} \to \mathbb{Z}^{d}$ be the valuation that is OK-relative to $R$ with respect to a $\mathbb{Z}$-linear embedding of $\mathbb{Z}^{d}$ into $\mathbb{R}$ induced by a vector $\boldsymbol{a}$. Let $(V,\mathfrak{m}_{\vartheta},\mathbb{K}_{\vartheta})$ be the associated valuation ring, $S=\vartheta(R)$ be the corresponding semigroup in $ \mathbb{Z}^{d}$ and $C= \Cone (S)$.\\
Let $\boldsymbol{u} \in \mathbb{Z}^{d}$; then we have the following two exact sequences:
\begin{equation}
\label{Equation(5.3)}
    0 \to \frac{I_{q}}{I_{q} \cap \mathbb{F}_{\geq \boldsymbol{u}}} \to \frac{R}{I_{q} \cap \mathbb{F}_{\geq \boldsymbol{u}}} \to \frac{R}{I_{q}} \to 0
\end{equation}
\begin{equation}
\label{Equation(5.4)}
    0 \to \frac{R \cap \mathbb{F}_{\geqslant \boldsymbol{u}}}{I_{q} \cap \mathbb{F}_{\geqslant \boldsymbol{u}}} \to \frac{R}{I_{q} \cap \mathbb{F}_{\geqslant \boldsymbol{u}}} \to \frac{R}{R \cap \mathbb{F}_{\geqslant \boldsymbol{u}} } \to 0
\end{equation}
From Equation \ref{Equation(5.3)}, \ref{Equation(5.4)}, we have
\begin{equation}
\label{Equation(5.5)}
    \ell_{R}\left(R/{I_{q}}\right)= \ell_{R}\left(\frac{R  \cap  \mathbb{F}_{\geqslant \boldsymbol{u}}}{I_{q} \cap \mathbb{F}_{\geqslant \boldsymbol{u}}}\right) + \ell_{R}\left(\frac{R}{R \cap \mathbb{F}_{\geqslant \boldsymbol{u}} }\right) - \ell_{R}\left( \frac{I_{q}}{I_{q} \cap \mathbb{F}_{\geqslant \boldsymbol{u}}}\right)
\end{equation}
Let ${\boldsymbol{v}} \in S$ satisfy the last condition in Definition \ref{OK valuation}. Choose the number $c>0$ from Lemma \ref{Lemma 4.4}, define $\boldsymbol{w}$=$c\boldsymbol{v}$, then $R \cap \mathbb{F}_{\geqslant q\boldsymbol{w}} \subseteq \mathfrak{m}^{cq} \subseteq I_{q}$. Therefore\\
\begin{equation}
\label{Equation(5.6)}
    I_{q} \cap \mathbb{F}_{\geqslant q\boldsymbol{w}} = R \cap \mathbb{F}_{\geqslant q\boldsymbol{w}}
\end{equation}
Let $H$ be the halfspace $\{\boldsymbol{u}^{'}  \ |  (\boldsymbol{a}, \boldsymbol{u}^{'}) <   (\boldsymbol{a}, \boldsymbol{w}  ) \}$. Then using Equation \ref{Equation(5.5)}, \ref{Equation(5.6)} and Lemma \ref{Lemma 3.9} we have,
\begin{equation}
\label{Equation(5.7)}
    \ell_{R}\left(R/{I_{q}}\right)=\sum_{h=1}^{[\mathbb{K}_{\vartheta}: \mathbb{K}]} \# (\vartheta^{h}(R) \cap qH) - \sum_{h=1}^{[\mathbb{K}_{\vartheta}: \mathbb{K}]} \# (\vartheta^{h}(I_{q}) \cap qH)
    \end{equation}
Fix $q^{'}$ a power of $p$. I do the same calculation with respect to the $p$-system of ideals $J_{\bullet}=\{I_{q^{'}}^{[p^{e}]} \}_{e=0}^{\infty}$ then using Lemma \ref{Lemma 4.4} and Lemma \ref{Lemma 3.9}, we have\\
\begin{equation}
\label{Equation(5.8)}
\ell_{R}\left(R/{I_{q^{'}}^{[p^{e}]}}\right)=\sum_{h=1}^{[\mathbb{K}_{\vartheta}: \mathbb{K}]} \# (\vartheta^{h}(R) \cap p^{e}q^{'}H) - \sum_{h=1}^{[\mathbb{K}_{\vartheta}: \mathbb{K}]} \# (\vartheta^{h}(I_{q^{'}}^{[p^{e}]}) \cap p^{e}q^{'}H)
\end{equation}
Note that we can consider $J_{q}^{'} = {R} $ for all $q=p^{e}$. It is clearly a $p$-family of ideals. Therefore using Proposition \ref{Proposition 4.8} and  Equation \ref{Equation(5.7)}, we have
\begin{equation}
\label{Equation(5.9)}
 \lim_{q\to\infty} \frac{\ell_{R}(R/I_{q})}{q^{d}}=   [\mathbb{K}_{\vartheta}:\mathbb{K}].(\Vol_{\mathbb{R}^{d}}(\Delta(S,\vartheta(J_{\bullet}^{'}) \cap H)-\Vol_{\mathbb{R}^{d}}(\Delta(S,\vartheta(I_{\bullet}) \cap H))
\end{equation}
Similarly applying Proposition \ref{Proposition 4.8} and Equation \ref{Equation(5.8)} for the $p$-system of ideals $J_{\bullet}=\{I_{q^{'}}^{[p^{e}]} \}_{e=0}^{\infty}$, we have
\begin{equation}
\label{Equation(5.10)}
    \lim_{e\to\infty} \frac{\ell_{R}\left(R/I_{q^{'}}^{[p^{e}]}\right)}{p^{ed}(q^{'})^{d}}=   [\mathbb{K}_{\vartheta}:\mathbb{K}].(\Vol_{\mathbb{R}^{d}}(\Delta(S,\vartheta(J_{\bullet}^{'}) \cap H)-\Vol_{\mathbb{R}^{d}}(\Delta(S,\vartheta(J_{\bullet}) \cap H))
\end{equation}
Using Definition \ref{P-body}, we have 
\begin{equation}
\label{Equation(5.11)}
    \Delta(S,\vartheta(J_{\bullet}))=\bigcup_{e=0}^{\infty} \frac{1}{p^{e}} \vartheta\left(I_{q^{'}}^{[p^{e}]}\right)+\Cone(S).
\end{equation}
Now  $\vartheta\left(I_{q^{'}}^{[p^{e}]}\right) \subseteq \vartheta (I_{q^{'}p^{e}})$, as $\{I_{q}\}$ is a $p$-system of ideals and $\frac{1}{p^{e}} \vartheta\left(I_{q^{'}}^{[p^{e}]}\right) \subseteq \frac{1}{p^{e}} \vartheta(I_{q^{'}p^{e}}) \subseteq \frac{1}{p^{e}q^{'}} \vartheta(I_{q^{'}p^{e}})$. Therefore $\bigcup_{e=0}^{\infty} \frac{1}{p^{e}} \vartheta\left(I_{q^{'}}^{[p^{e}]}\right)+\Cone(S) \subseteq \bigcup_{e=0}^{\infty} \frac{1}{p^{e}q^{'}} \vartheta(I_{q^{'}p^{e}})+\Cone(S) \subseteq \bigcup_{q=1}^{\infty} \frac{1}{q} \vartheta(I_{q})+\Cone(S) = \Delta(S,\vartheta(I_{\bullet}) \cap \mathbb{H}) $. From this, we conclude\\
\begin{equation}
\label{Equation(5.12)}
    \Vol_{\mathbb{R}^{d}}(\Delta(S,\vartheta(J_{\bullet}) \cap H)) \leqslant \Vol_{\mathbb{R}^{d}}(\Delta(S,\vartheta(I_{\bullet}) \cap H))
\end{equation}
Moreover $p^{e}\vartheta(I_{q^{'}})+S \subseteq \vartheta(I_{q^{'}}^{[p^{e}]})$. Indeed for $\boldsymbol{s}_{1} \in \vartheta(I_{q^{'}})$ and $\boldsymbol{s}_{2} \in S$, we have $\vartheta(x)=\boldsymbol{s}_{1}$ for some $x \in I_{q^{'}}$ and $\vartheta(y)=\boldsymbol{s}_{2}$ for some $y \in R$. Therefore $p^{e}\boldsymbol{s}_{1} + \boldsymbol{s}_{2}= \vartheta(x^{[p^{e}]} \times y) = $ and $x^{[p^{e}]} \times y \in I_{q^{'}}^{[p^{e}]}$.

Using the observation above and Theorem \ref{Fujita type approximation}, we have for any given $\epsilon > 0$, there exist $q_{0}$ such that for all $q^{'} \geqslant q_{0}$, \\
\begin{equation}
\label{Equation(5.13)}
    \Vol_{\mathbb{R}^{d}}(\Delta(S,\vartheta(I_{\bullet}) \cap H)-\epsilon \leqslant \lim_{e\to\infty}  \frac  {\#(p^{e}\vartheta(I_{q^{'}})+S) \cap p^{e}q^{'}H)}{p^{ed}(q^{'})^{d}} \leqslant \lim_{e\to\infty}  \frac {\# (\vartheta(I_{q^{'}}^{[p^{e}]} \cap p^{e}q^{'}H)}{p^{ed}(q^{'})^{d}}
\end{equation}
Now using Equation \ref{Equation(5.12)} and \ref{Equation(5.13)} in Equation \ref{Equation(5.9)} and \ref{Equation(5.10)} we have for all $q^{'} \geqslant q_{0}$,
\begin{equation*}
\label{Equation(5.14)}
\lim_{q\to\infty} \frac{\ell_{R}(R/I_{q})}{q^{d}} \leqslant \lim_{e\to\infty} \frac{\ell_{R}(R/I_{q^{'}}^{[p^{e}]})}{p^{ed}(q^{'})^{d}}=\dfrac{e_{HK}(I_{q^{'}},R)}{(q^{'})^{d}} \leqslant \lim_{q\to\infty} \frac{\ell_{R}(R/I_{q})}{q^{d}} + \epsilon
\end{equation*}
Therefore letting $q^{'} \to \infty$, we obtain the result in OK-domain case.\\
\textbf{Step 2: Reduction to the OK-domain case:}

Since $\ell(R/J)$ is unaffected by completion for any $\mathfrak{m}$-primary ideal $J$, we may assume that $R$ is complete.\\
Using \cite[Corollary 5.18]{hernandez2018local} and Corollary \ref{Corollary 4.12} by choosing  $R=\hat{R}$ and $N=N(\hat{R})$ with the condition that $\Dim N < d$ we can therefore assume that $R$ is complete and reduced.\\
Now suppose that the minimal primes of the complete reduced ring  $R$ are  $\{P_{1},..., P_{n}\}$. Let $R_{i}=R/P_{i}$ be complete local domain for all $1 \leqslant i \leqslant n$. By Remark \ref{Remark(4.16)}, we have
\begin{equation}
\label{Equation(5.16)}
    \dfrac{e_{HK}(I_{q},R)}{(q)^{d}}=\sum_{i=1}^{n}\dfrac{e_{HK}(I_{q}R_{i},R_{i})}{(q)^{d}}
\end{equation}
We also have from Lemma \ref{Lemma 5.1}
\begin{equation}
\label{Equation(5.17)}
   \lim_{q \to \infty} \dfrac{\ell_{R}({R/I_{q}})}{q^{d}}=\sum_{i=1}^{n} \lim_{q \to \infty} \dfrac{\ell_{R_{i}}(R_{i}/I_{q}R_{i})}{q^{d}}
\end{equation}
Since each $R_{i}$ is a complete local domain and therefore an OK-domain by Example \ref{Example 3.5}, using Equation \ref{Equation(5.16)} and \ref{Equation(5.17)} we have
\begin{flalign*}
    \lim_{q \to \infty} \dfrac{\ell_{R}({R/I_{q}})}{q^{d}}  & =\sum_{i=1}^{n} \lim_{q \to \infty} \dfrac{\ell_{R_{i}}(R_{i}/I_{q}R_{i})}{q^{d}}\\
    & =\sum_{i=1}^{n} \lim_{q \to \infty}\dfrac{e_{HK}(I_{q}R_{i},R_{i})}{(q)^{d}}\\
    & =\lim_{q\to \infty}\dfrac{e_{HK}(I_{q},R)}{(q)^{d}}
    \end{flalign*}
\end{proof}

\section{Acknowledgement}
I would like to thank my advisor Dr. Jonathan Monta\~{n}o, for constant encouragement and many helpful suggestions; specifically, our Starbucks discussions made this a journey to remember. The author was partially supported by NSF Grant DMS $\#2001645$.
\bibliographystyle{plain}
\bibliography{citation}
\end{document}